\newtheorem{thm}{Theorem}[section]
\newtheorem{pro}[thm]{Proposition}
\newtheorem{lem}[thm]{Lemma}
\newtheorem{cla}[thm]{Claim}
\newtheorem{cor}[thm]{Corollary}
\theoremstyle{definition}
\newtheorem{obs}[thm]{Observation}
\newtheorem{exa}[thm]{Example}
\newtheorem{defn}[thm]{Definition}
\newtheorem{conj}[thm]{Conjecture}
\newcommand{\een}{\end{enumerate}}
\newcommand{\blem}{\begin{lem}}
\newcommand{\elem}{\end{lem}}
\newcommand{\bcl}{\begin{cla}}
\newcommand{\ecl}{\end{cla}}
\newcommand{\ethm}{\end{thm}}
\newcommand{\bpr}{\begin{pro}}
\newcommand{\epr}{\end{pro}}
\newcommand{\bco}{\begin{cor}}
\newcommand{\eco}{\end{cor}}
\newcommand{\bcon}{\begin{conj}}
\newcommand{\econ}{\end{conj}}
\newcommand{\bde}{\begin{defn}}
\newcommand{\ede}{\end{defn}}
\newcommand{\bex}{\begin{exa}}
\newcommand{\eexa}{\end{exa}}
\newcommand{\bobs}{\begin{obs}}
\newcommand{\eobs}{\end{obs}}
\newcommand{\bexe}{\begin{exe}}
\newcommand{\eexe}{\end{exe}}
\let\@fnsymbol\@arabic
\begin{document}

\title{Structure of the Group of Balanced Labelings on Graphs, its Subgroups and Quotient Groups.\footnote{An extended abstract of this article has been accepted at the European Conference on Combinatorics, Graph Theory and Applications  - Eurocomb 2013}}
\author[*]{Yonah Cherniavsky \thanks{yonahch@ariel.ac.il}}
\author[**]{Avraham Goldstein \thanks{avraham.goldstein.nyc@gmail.com}}
\author[*]{Vadim E. Levit \thanks{levitv@ariel.ac.il}}
\affil[*]{Ariel University, Israel}
\affil[**]{City University of New York, NY, USA}
\renewcommand\Authands{ and }

\maketitle
\large
\begin{abstract}
We discuss functions from edges and vertices of an undirected graph to an Abelian group. Such functions, when the sum of their values along any cycle is zero, are called balanced labelings. The set of balanced labelings forms an Abelian group. We study the structure of this group and the structure of two closely related to it groups: the subgroup of balanced labelings which consists of functions vanishing on vertices and the corresponding factor-group. This work is completely self-contained, except the algorithm for obtaining the $3$-edge-connected components of an undirected graph, for which we make appropriate references to the literature.

\textbf{Keywords:} consistent marked graphs; balanced signed graphs; gain graphs; voltage graphs; $k$-edge connectivity.
\end{abstract}

\section{Introduction}
Let $G$ be an undirected graph with the set of vertices $V=V(G)$ and the set of edges $E=E(G)$. $G$ does not have to be connected and we permit multiple edges and loops. In the literature such graphs are sometimes called multigraphs or pseudographs. Let $A$ be an Abelian group. In this article we study three topics:
\begin{enumerate}
\item A function $f:E\rightarrow A$ is called balanced if the sum of its values along any cycle of $G$ is zero. By a cycle we mean a closed truncated trail (the definition is given in the beginning of Section~\ref{Prl}). The set $H(E,A)$ of all balanced functions $f:E\rightarrow A$ is a subgroup of the free Abelian group $A^E$ of all functions from $E$ to $A$. We give a full description of the structure of the group $H(E,A)$ and provide an $O(|E|)$-time algorithm to construct a set of the generators of its direct summands which are the group $A$ and the subgroup of involutions of $A$ which we denote $A_2$. In the literature the values of functions from $E$ to $A$ are called weights.
\item A function $g:V\rightarrow A$ is called balanceable if there exists some $f:E\rightarrow A$ such that the sum of all values of $g$ and $f$ along any cycle of $G$ is zero. Since our trails are truncated, the final vertex is not present in the trail, so the summation stops at the last edge and, thus, does not include the value on the last vertex. The set $B(V,A)$ of all balanceable functions $g:V\rightarrow A$ is a subgroup of the free Abelian group $A^V$ of all functions from $V$ to $A$. We give a full description of the structure of the group $B(V,A)$.
\item A function $h:V\bigcup E\rightarrow A$, which takes values on both vertices and edges of $G$, is called balanced if the sum of its values along any cycle of $G$ is zero. As before, due to the truncation of our trails, the value on the last vertex is not included. The set $W(V\bigcup E,A)$ of all balanced functions $h:V\bigcup E\rightarrow A$ is a subgroup of the free Abelian group $A^{V\bigcup E}$ of all functions from $V\bigcup E$ to $A$. The group $H(E,A)$ is naturally isomorphic to the subgroup of $W(V\bigcup E,A)$, consisting of all functions which take every vertex of $G$ to $0$. Abusing the notation, we identify $H(E,A)$ and that subgroup of $W(V\bigcup E,A)$. We show that $B(V,A)$ is naturally isomorphic to the factor group $W(V\bigcup E,A)/H(E,A)$ and use this fact to give a full description of the group $W(V\bigcup E,A)$. In the literature, for example in~\cite{NI}, the functions from $V\bigcup E$ to $A$ are called labelings.
\end{enumerate}
The study of the cycle space of a directed graph and of the integer-valued edge functions vanishing on all cycles of that graph, which are dually related to the the cycle space, are classical in graph theory. First time the similar questions were considered for the connected undirected graphs by Balakrishnan and Sudharsanam in~\cite{OI}. In this work the functions from edges to real numbers, which vanish on the cycles, are called cycle-vanishing edge valuations. It turns out that for the undirected case the dimension of the cycle space and, dually, the dimension of the space of cycle vanishing edge valuations are closely related to another classical concept in graph theory $-$ that of $3$-edge connectivity in an undirected graph. In a very recent work~\cite{NI}, Joglekar, Shah and Diwan consider functions, which take values on both vertices and edges of a connected undirected graph, with their values in a finite Abelian group $A$. In that work, which is closely related to~\cite{OI}, the amount of such functions is calculated among other important properties of these functions.
\\ \\
In this article we find the group structure of the Abelian groups $H(E,A)$, $B(V,A)$ and $W(V\bigcup E,A)$ for a finite undirected graph $G$. With this motivation in mind we review some edge-connectivity properties of undirected graphs. We discuss the $k$-edge-connectivity, present the edge version of the Menger's Theorem and provide an alternative proof. We state and prove several notions and results, which are commonly used in the literature, but usually neither formally stated nor proved there. Further, we investigate $3$-edge-connectivity related properties of graphs. This leads us several new findings concerning the $3$-edge-connectivity structure of a graph.

Additionally, we refer to a recent very efficient algorithm, which computes the $3$-edge-connected components of an undirected graph.

In parallel we recall and discuss the $\mathbb{F}_2$ homology theory of undirected graphs and make use of some of its results for our studies.

Finally, we combine all of the above to describe the groups $H(E,A)$, $B(V,A)$ and $W(V\bigcup E,A)$. Except for the above-mentioned algorithm, some basic properties of Abelian groups, and some basic notions from Linear Algebra, the material of this article is self-contained. For additional detailed exposition of the topics, presented in this article, we refer to~\cite{Diestel} and~\cite{Harju}. In~\cite{RZ} the triples $(\Gamma, g, G)$, where $\Gamma$ is a not necessary finite undirected graph, $G$ is a group and $g$ is a function from the set of edges of $\Gamma$ to $G$, are considered. The edges of $\Gamma$ are thought of as having an arbitrary, but fixed, orientation and the equality $g(-e)=(g(e))^{-1}$ holds for every edge $e$, where $-e$ is $e$, with its orientation inverted. Such triples are called gain graphs or voltage graphs and the values of $g$ on the edges are called gains. Gain graphs are called balanced if $g$ along every cycle of $\Gamma$ is the identity element. For the voltage graphs being balanced is the same as satisfying the Kirchhoff's voltage law. In~\cite{RZ} gain graphs with an Abelian group $G$ are studied and several strong criteria for such gain graphs to be balanced are obtained. Voltage graphs are discussed in~\cite{GT}. For the survey of bibliography on signed and gain graphs and allied areas see~\cite{Zaslavsky}.

\section{Preliminaries}\label{Prl}
An undirected graph $G$ consists of a set of vertices $V=V(G)$ and a set of edges $E=E(G)$. To each $e\in E$ corresponds a pair of vertices $v,w\in V$. Abusing the terminology, for any vertex $v$ we permit the pair $v,v$ to correspond to a loop edge $e$. We say that the vertices $v$ and $w$ are connected by the edge $e$ and are adjacent to $e$. We also say that the edge $e$ goes between $v$ and $w$ and that the edge $e$ is adjacent to the vertices $v$ and $w$.
\bde A trail from a vertex $x$ to a vertex $y$ is an alternating sequence $v_1,e_1,v_2,e_2,...,v_{n},e_{n},v_{n+1}$ of vertices and edges such that $v_1=x$, $v_{n+1}=y$, each $e_j$, for $j=1,...,n$, goes between $v_j$ and $v_{j+1}$, and $e_i\ne e_j$ for $i\ne j$.
\ede
Note: The last part of our definition asserts that a trail does not have any repeating edges.
\bde A truncated trail (\textit{ttrail}) $p$ from a vertex $x$ to a vertex $y$ is an alternating sequence $v_1,e_1,v_2,e_2,...,v_{n},e_{n}$ of vertices and edges such that $v_1,e_1,v_2,e_2,...,v_{n},e_{n},y$ is a trail from $x$ to $y$.
\ede
\bde Let $p=v_1,e_1,v_2,e_2,...,v_{n},e_{n}$ be a ttrail from a vertex $x$ to a vertex $y$ and $p'=v'_1,e'_1,v'_2,e'_2,...,v'_{n'},e'_{n'}$ be a ttrail from a vertex $y$ to a vertex $w$. Then the alternating sequence $p+p'$ of vertices and edges is defined as $v_1,e_1,v_2,e_2,...,v_{n},e_{n},v'_1,e'_1,v'_2,e'_2,...,v'_{n'},e'_{n'}$.
\ede
The alternating sequence $p+p'$ will itself be a ttrail going from $x$ to $w$ if and only if the ttrails $p$ and $p'$ do not have any common edges.
\bde Let $p=v_1,e_1,v_2,e_2,...,v_{n},e_{n}$ be a ttrail from a vertex $x$ to a vertex $y$. Then the ttrail $p^{-1}$ from $y$ to $x$ is defined as $v'_1,e'_1,v'_2,e'_2,...,v'_{n},e'_{n}$ where $v'_1=y$, $v'_i=v_{n+2-i}$ for $i=2,...,n$, and $e'_i=e_{n+1-i}$ for $i=1,...,n$.
\ede
Note: The edges and the inner vertices of $p^{-1}$ are just the edges and the inner vertices of $p$, taken in the reversed order.\\
Whenever we do not care about the direction of a ttrail and only make reference to its edges, we will abuse the notations and speak of $p$ and of $p^{-1}$ as ttrails between the vertices $x$ and $y$.
\bde A ttrail $p$ from a vertex $x$ to itself is called a closed ttrail.
\ede
Note: We permit the trivial closed ttrail, which contains no vertices and no edges.
\bde A closed ttrail $p=v_1,e_1,v_2,e_2,...,v_{n},e_{n}$ is called a cycle if $v_i\ne v_j$ whenever $i\ne j$.
\ede
Let $A$ be an Abelian group with the group operation denoted by $+$ and the identity element denoted by $0$.
\bde A function $f:E\rightarrow A$ is called balanced if the sum $f(e_1)+...+f(e_n)$ of the values of $f$ on all edges of each closed ttrail of $G$ is equal to $0$.
\ede
\bde The set of all balanced functions $f:E\rightarrow A$ is denoted by $H(E,A)$. $H(E,A)$ is a subgroup of the Abelian group $A^E$ of all functions from $E$ to $A$.
\ede
\bde A function $g:V\rightarrow A$ is called balanceable if there exists some $f:E\rightarrow A$ such that the sum of all values $g(v_1)+f(e_1)+g(v_2)+f(e_2)+...+g(v_n)+f(e_n)$ along every closed ttrail of $G$ is zero. We say that this function $f:E\rightarrow A$ balances the function $g:V\rightarrow A$.
\ede
\bde The set of all balanceable functions $g:V\rightarrow A$ is denoted by $B(V,A)$. $B(V,A)$ is a subgroup of the free Abelian group $A^V$ of all functions from $V$ to $A$.
\ede
\bde A function $h:V\bigcup E\rightarrow A$, which takes both vertices and edges of $G$ to some elements of $A$, is called balanced if the sum of its values $h(v_1)+h(e_1)+h(v_2)+h(e_2)+...+h(v_n)+h(e_n)$ along each closed ttrail of $G$ is zero.
\ede
\bde The set of all balanced functions $h:V\bigcup E\rightarrow A$ is denoted by $W(V\bigcup E,A)$. $W(V\bigcup E,A)$ is a subgroup of the Abelian group $A^{V\bigcup E}$ of all functions from $V\bigcup E$ to $A$.
\ede
Clearly, any balanced function $f:E\rightarrow A$ can be viewed as a balanced function from $V\bigcup E$ to $A$, which is $0$ on every vertex of $G$. Thus, we regard $H(E,A)$ as a subgroup of $W(V\bigcup E,A)$. Furthermore, these three groups are related as follows.
\begin{lem} \label{quotient.group} $H(E,A)$ is a subgroup of $W(V\bigcup E,A)$ and the quotient\\ $W(V\bigcup E,A)/H(E,A)$ is naturally isomorphic to $B(V,A)$.
\end{lem}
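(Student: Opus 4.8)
The plan is to realize the desired isomorphism as the map induced by restricting a labeling to the vertex set, and then to invoke the first isomorphism theorem for Abelian groups.

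First I would dispatch the inclusion $H(E,A)\subseteq W(V\bigcup E,A)$, which is essentially the observation already made in the text: given a balanced $f\colon E\rightarrow A$, extend it by $0$ on every vertex to get $h\colon V\bigcup E\rightarrow A$; then along any closed ttrail $v_1,e_1,\dots,v_n,e_n$ the sum $h(v_1)+h(e_1)+\dots+h(v_n)+h(e_n)$ collapses to $f(e_1)+\dots+f(e_n)=0$, so $h\in W(V\bigcup E,A)$. Since addition in all the groups involved is pointwise, this realizes $H(E,A)$ as a subgroup. Next I would define $r\colon W(V\bigcup E,A)\rightarrow A^V$ by $r(h)=h|_V$, the restriction of $h$ to vertices. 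The key point is that $r(h)$ always lies in $B(V,A)$: by the definition of a balanced labeling, $h(v_1)+h(e_1)+\dots+h(v_n)+h(e_n)=0$ along every closed ttrail, which is precisely the statement that $h|_E$ balances $h|_V$; hence $h|_V$ is balanceable. So we get a homomorphism $r\colon W(V\bigcup E,A)\rightarrow B(V,A)$, the homomorphism property being immediate from compatibility of restriction with pointwise addition.

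Then I would verify surjectivity and compute the kernel. For surjectivity, take $g\in B(V,A)$; by definition there is some $f\colon E\rightarrow A$ balancing $g$, so the labeling $h$ with $h|_V=g$ and $h|_E=f$ lies in $W(V\bigcup E,A)$ and satisfies $r(h)=g$. For the kernel, $r(h)=0$ means $h$ vanishes on every vertex, and a balanced labeling vanishing on all vertices is exactly, under the identification fixed in the text, an element of $H(E,A)$; conversely every such element is killed by $r$. Hence $\ker r=H(E,A)$, and the first isomorphism theorem yields $W(V\bigcup E,A)/H(E,A)\cong B(V,A)$. The isomorphism is natural in the sense that it is induced by restriction and is independent of any choice of balancing function.

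I do not expect a genuine obstacle: the statement unwinds directly from the definitions of \emph{balanced} and \emph{balanceable}. The only points deserving a moment's care are (i) that the restriction of a balanced labeling is genuinely balanceable, i.e.\ that its edge-part serves as a balancing function, and (ii) that the kernel of restriction is exactly the fixed copy of $H(E,A)$ sitting inside $W(V\bigcup E,A)$, rather than merely a group abstractly isomorphic to it; both become transparent once the definitions are written out as above.
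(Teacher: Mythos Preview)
Your proof is correct and follows essentially the same approach as the paper: both define the restriction-to-vertices map from $W(V\bigcup E,A)$ to $B(V,A)$, identify its kernel as $H(E,A)$, and (implicitly in the paper, explicitly in your case) invoke the first isomorphism theorem. You have simply spelled out the surjectivity and kernel computations more carefully than the paper's two-sentence sketch.
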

\begin{proof} The natural isomorphism from $W(V\bigcup E,A)$ onto $B(V,A)$ ``forgets'' the values of the function $h\in W(V\bigcup E,A)$ on the edges of $G$ and thus ``regards'' it as just a balanceable function from $V$ to $A$. The kernel of this isomorphism is the subgroup of $W(V\bigcup E,A)$ consisting of all functions $h$ such that $h$ is $0$ on every vertex of $G$. This subgroup is precisely $H(E,A)$.
\end{proof}
We review some basic definitions and facts concerning Abelian groups:
\bde The order $ord(a)$ of an element $a\in A$ is the minimal positive number such that $ord(a)a=0$. If no such positive number exist we say that $ord(a)=\infty$.
\ede
\bde The set of all elements of $A$ of order $2$ is denoted by $A_2$. $A_2$ is a subgroup of $A$.
\ede
\bde The image of the doubling map from $A$ to itself, which multiplies every element of $A$ by $2$, is a subgroup of $A$ denoted by $2A$.
\ede
Note: Any element $b \in 2A$ has at least one element $a\in A$ such that $b=2a$. This $a$ is called a half of $b$. There could be another element $a'\in A$ such that $b=2a'$. However $a-a'$ must belong to $A_2$ since $2(a-a')=b-b=0$. Vice versa, adding any $c$ from $A_2$ to $a$ again produces a half of $b$. Thus, the half of an element from $2A$ in $A$ is defined up to an element of $A_2$. Indeed, $A_2$ is precisely the kernel of the doubling map.
\\ \\
Now let $\mathbb{F}_2$ be a field with two elements $0$ and $1$. Vector spaces $\mathbb{F}_2^E$ and $\mathbb{F}_2^V$ are defined as the sets of all binary sequences whose entries correspond, respectively, to the edges and the vertices of $G$. There is a trivial one-to-one correspondence between all the subsets of $E$ and all the elements of $\mathbb{F}_2^E$ and a trivial one-to-one correspondence between all the subsets of $V$ and all the elements of $\mathbb{F}_2^V$. Abusing the notation, we speak interchangeably of sets of edges and of vectors in $\mathbb{F}_2^E$. Similarly, we speak interchangeably of sets of vertices and of vectors in $\mathbb{F}_2^V$.
\bde The boundary linear map $\delta:\mathbb{F}_2^E\rightarrow \mathbb{F}_2^V$ is defined by taking each edge to the sum of its two adjacent vertices.
\ede
\bde The kernel $Ker(\delta)$ of $\delta$ is called the Cycle Space of $G$ and its elements are called the homological cycles of $G$.
\ede
For each ttrail $p$ let $\epsilon(p)\in \mathbb{F}_2^E$ be the set of all edges which are present in $p$. It is easy to see that if $p$ is a closed ttrail then $\epsilon(p)$ is a homological cycle. Also, it is a well-known result that:
\begin{lem} \label{homologicalcyclesum} Every homological cycle $c$ is a sum $\epsilon(c_1)+...+\epsilon(c_n)$ where $c_1,...,c_n$ are some cycles such that every edge of $c$ appears in at least one of the $c_1,...,c_n$.\end{lem}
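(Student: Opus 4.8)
The plan is to prove Lemma~\ref{homologicalcyclesum} by identifying homological cycles with ``even'' edge sets and then peeling off graph-theoretic cycles one at a time. First I would translate the condition $c\in Ker(\delta)$ into combinatorics: since $\delta$ sends an edge to the sum of its two endpoints and a loop at $v$ to $v+v=0$, the coefficient of a vertex $v$ in $\delta(c)$ equals, modulo $2$, the number of non-loop edges of $c$ incident to $v$. Hence $c$ is a homological cycle exactly when every vertex of $G$ is incident to an even number of edges of $c$; call such a $c$ an \emph{even set}. Two facts will be used freely: the sum in $\mathbb{F}_2^E$ (symmetric difference) of two even sets is even, by linearity of $\delta$; and $\epsilon(c')$ is an even set for every closed ttrail $c'$, as already noted before the lemma.

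The heart of the matter is the claim that \emph{every non-empty even set $c$ contains the edge set of some cycle of $G$}. If $c$ contains a loop, that loop by itself is a cycle and we are done. Otherwise, starting at a vertex incident to an edge of $c$, build a trail using only edges of $c$ and repeating no edge, prolonging it as long as possible; since $E$ is finite this halts at a vertex $z$ all of whose incident $c$-edges have already been used. A parity count forces the trail to be closed: a trail not returning to its start uses an odd number of edges at its last vertex (two for each intermediate passage through $z$, plus one for the final arrival), contradicting evenness of $c$ at $z$. So we have a non-trivial closed ttrail with all edges in $c$; choosing one of minimal length among all such, we see it cannot pass through a vertex twice (else it would strictly contain a shorter non-trivial closed ttrail), so it is a cycle $c_1$ with $\epsilon(c_1)\subseteq c$.

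Granting this claim, Lemma~\ref{homologicalcyclesum} follows by induction on $|c|$. If $c=\emptyset$ it is the empty sum of cycles. If $c\ne\emptyset$, take $c_1$ as above and put $c':=c+\epsilon(c_1)=c\setminus\epsilon(c_1)$, an even set with strictly fewer edges; by induction $c'=\epsilon(c_2)+\cdots+\epsilon(c_n)$ with every edge of $c'$ occurring in some $c_i$, $i\ge 2$. Then $c=\epsilon(c_1)+\epsilon(c_2)+\cdots+\epsilon(c_n)$, and every edge of $c$ lies in $\epsilon(c_1)$ or in $c'$, hence in at least one of $c_1,\dots,c_n$.

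The only genuinely delicate step is the emphasized claim: getting the parity bookkeeping at the terminal vertex right when multiple edges and loops are present, and arguing cleanly that a shortest non-trivial closed ttrail does not repeat a vertex. Everything after that is routine.
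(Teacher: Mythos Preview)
Your argument is correct and follows essentially the same approach as the paper: both identify homological cycles with edge sets in which every vertex has even non-loop degree, then peel off one graph-theoretic cycle at a time and induct on the number of edges. The only cosmetic difference is in how the cycle is extracted---the paper walks along non-loop edges of $c$ until a vertex repeats, directly producing a cycle, whereas you first build a maximal trail, argue by parity that it is closed, and then pass to a shortest closed ttrail; both are standard and equally valid.
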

\begin{proof} We prove it by induction on the number $m$ of edges in $c$. If $m=0$ then $c$ is the trivial homological cycle and $c=\epsilon(c_1)$ where $c_1$ is the trivial ttrail. If $m=1$ then $c=\{e\}$, where $e$ is a loop. Then the sequence $c_1=v,e$, where $v$ is the vertex adjacent to $e$, is a cycle and $c=\epsilon(c_1)$. Assume that our lemma holds for all homological cycles containing up to $m$ edges. Let $c$ be a homological cycle containing $m+1$ edges. We call these edges $e_1,...,e_{m+1}$ and their adjacent vertices $v_1,w_1; ...; v_{m+1},w_{m+1}$, respectively.
\\ \\
If $e_1$ is a loop (so $v_1=w_1$) then we define the cycle $c_1$ to be sequence $v_1,e_1$. Since $\delta(e_2+...+e_{m+1})=v_1+w_1=0$, we get that $e_2+...+e_{m+1}$ is a homological cycle with $m$ edges and, by the induction hypothesis, $e_2+...+e_{m+1}=\epsilon(c_2)+...+\epsilon(c_n)$ for some cycles $c_2,...,c_n$. Thus $c=\epsilon(c_1)+...+\epsilon(c_n)$.
\\ \\
If $e_1$ is not a loop, which means $v_1\ne w_1$, then there must be at least one non-loop edge in $c$, different from $e_1$, which is adjacent to the vertex $w_1$. We call this edge $e_2$ and its vertex $w_1$ we name $v_2$. If $w_2$ is equal to $v_1$ then $v_1,e_1,v_2,e_2$ is a cycle and $e_3+...+e_{m+1}$ is a homological cycle. In that case the lemma follows by applying the induction hypothesis. If $w_2\ne v_1$ then there must be at least one non-loop edge in $c$, different from $e_1$ and $e_2$, which is adjacent to the vertex $w_2$. We call this edge $e_3$. This process goes on, until for some $1\le i<j\le m+1$, $w_j$ is equal to $v_i$. Then $c_1=v_i,e_i,...,v_j,e_j$ is a cycle and ``the remaining'' sum $e_1+...+e_{i-1}+e_{j+1}+...+e_{m+1}$ is a homological cycle, to which the induction hypothesis applies.
\end{proof}
\bde For two vertices $v, w$ of $G$, a homological path $p$ between $v$ and $w$ is a vector $p\in \mathbb{F}_2^E$ such that $\delta(p)=v+w$
\ede
Thus, a homological cycle is a homological path between any vertex $v$ and itself.
\\ \\
We conclude this section by stating and proving a classical result, but in a slightly more general form with comparison to its usual presentation in the literature, since our graphs can have multiple edges, loops and do not need to be connected:
\begin{thm} \label{classicaldimension} The dimension $dc(G)$ of the Cycle Space of a graph $G$ is equal to $|E|-|V|+con(G)$.
\end{thm}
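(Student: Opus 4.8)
The plan is to read off $dc(G)=\dim Ker(\delta)$ from the rank-nullity theorem applied to the boundary map $\delta:\mathbb{F}_2^E\to\mathbb{F}_2^V$, which gives $dc(G)=|E|-\dim Im(\delta)$, and then to show $\dim Im(\delta)=|V|-con(G)$. Since every space in sight is a finite-dimensional $\mathbb{F}_2$-vector space, rank-nullity is available with no extra work, so the whole theorem reduces to a single rank computation.

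To compute $\dim Im(\delta)$, I would first observe that $Im(\delta)$ is spanned by the vectors $\delta(e)$, $e\in E$: a loop contributes $\delta(e)=v+v=0$, while a non-loop edge between $v$ and $w$ contributes $v+w$. Edges lying in different connected components of $G$ involve disjoint sets of vertices, so $Im(\delta)$ is the internal direct sum, over the connected components $C$ of $G$, of the subspaces $Im(\delta)\cap\mathbb{F}_2^{V(C)}$. It therefore suffices to show that for a connected graph $C$ on $k=|V(C)|$ vertices this subspace has dimension exactly $k-1$; summing over components then yields $\dim Im(\delta)=\sum_C(|V(C)|-1)=|V|-con(G)$, and the theorem follows by substitution.

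For the connected case I would prove the two inequalities separately. For the upper bound, every generator $v+w$ has coordinate-sum $0$ in $\mathbb{F}_2$, so $Im(\delta)\cap\mathbb{F}_2^{V(C)}$ lies in the hyperplane $\{x:\sum_{u\in V(C)}x_u=0\}$, of dimension $k-1$. For the lower bound — the real content of the statement — I would fix a spanning tree $T$ of $C$ and show that the $k-1$ vectors $\{\,v+w:\{v,w\}\in E(T)\,\}$ are linearly independent, by induction on $k$: peel off a leaf $\ell$ of $T$ together with its unique incident tree edge $\{\ell,\ell'\}$; the vector $\ell+\ell'$ is the only one among these generators with a nonzero $\ell$-coordinate, so it cannot occur in a nontrivial dependence among the remaining $k-2$ vectors, which are independent by the induction hypothesis. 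Hence the dimension is exactly $k-1$.

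The only genuinely nontrivial step is this leaf-induction, i.e.\ the linear independence in $\mathbb{F}_2^V$ of the edge-vectors of a spanning forest of $G$; everything else is bookkeeping. An essentially equivalent route one could follow instead is to pick a spanning forest $F$ and verify directly that the $|E|-|V|+con(G)$ fundamental homological cycles attached to the edges outside $F$ form a basis of $Ker(\delta)$ — they are independent because each contains a "private" non-forest edge, and they span because subtracting the appropriate ones from any homological cycle leaves a homological cycle supported on $F$, which must vanish since $\delta$ restricted to $\mathbb{F}_2^{E(F)}$ is injective; but that injectivity is once more exactly the leaf-induction above.
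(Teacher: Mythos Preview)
Your argument is correct. The paper, however, proves the theorem by a different and somewhat more elementary route: induction on $|E|$. Starting from the empty-edge case, the paper adds one edge $e_{n+1}$ at a time and checks that the quantity $dc(G)-|E|+|V|-con(G)$ is preserved, splitting into the two cases where $e_{n+1}$ joins two components (so $con$ drops by one and $dc$ is unchanged) versus where it joins vertices already in the same component (so $con$ is unchanged and $dc$ goes up by one, witnessed by $e_{n+1}$ plus any homological path between its endpoints). Your approach instead computes $\dim Im(\delta)$ directly via rank--nullity and a spanning forest; this has the advantage of actually exhibiting the image $Im(\delta)$ as the ``even-weight'' hyperplane in each $\mathbb{F}_2^{V(C)}$, and your alternative fundamental-cycle description simultaneously produces an explicit basis of $Ker(\delta)$. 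The paper's edge-induction avoids choosing any spanning forest and needs no rank--nullity, at the cost of giving less structural information about $Im(\delta)$ and no explicit basis.
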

\begin{proof} This theorem is evident for an empty graph. We proceed to prove it for non-empty graphs by induction on $|E|$. If $|E|=0$ then $G$ consists of $con(G)$ vertices and zero edges and, thus, has zero cycles. So our theorem holds. Assume that the theorem holds for all graphs with $|E|\le n$. Any graph $G'$ with $|E|=n+1$ can be obtained from some graph $G$ with $|E|=n$ by attaching to it an edge $e_{n+1}$. If $e_{n+1}$ connects vertices in two different connected components of $G$ then $dc(G')=dc(G)$ and $con(G')=con(G)-1$. So the theorem holds.
\\ \\
If $e_{n+1}$ connects vertices $x$ and $y$ which belong to the same connected component of $G$ then the sum of $e_{n+1}$ and any homological path $p$ in $G$ between $x$ and $y$ is an element of the Cycle Space of $G'$, but not of the Cycle Space of $G$. On the other hand, any element of the Cycle Space of $G'$, which is not an element of the Cycle Space of $G$, must contain $e_{n+1}$. Thus, in this case $dc(G')=dc(G)+1$ and $con(G')=con(G)$. So, again, the theorem holds.
\end{proof}

\section{$k$-Edge-Connectivity}
\bde Two distinct vertices $v$ and $w$ of $G$ are called $k$-edge-connected if exist $k$ ttrails $p_1,...,p_k$ between $v$ and $w$ such that any $p_i$ and $p_j$, where $i\ne j$, have no common edges. By definition, we say that every vertex $v$ of $G$ is $k$-edge-connected to itself for any positive integer $k$.
\ede
We say that the vertices $v$ and $w$ are connected if they are $1$-edge-connected. Otherwise, we say that $v$ and $w$ are disconnected. All the vertices connected to $v$, together with all the edges adjacent to them, are called the connected component $Con(v)$ of $v$.
\\\\
\textbf{Menger's Theorem.} \textit{Two distinct vertices $v$ and $w$ are $k$-edge-connected if and only if no deletion of any $k-1$ edges from $G$ disconnects $v$ and $w$.}
\begin{proof} In one direction this theorem is trivial $-$ if $p_1,...,p_k$ are pairwise edge-disjoint ttrails between $v$ and $w$ then any deletion of any $k-1$ edges $e_1,...,e_{k-1}$ from $G$ will not cut all these $k$ ttrails. So $x$ and $y$ will stay connected after any such deletion. Now we prove the other direction. Assume that the statement is false. Then there exist counter-examples to it. Let $G$ be a counter-example with the minimal amount of edges in it. Thus, $G$ is an undirected graph containing some (distinct) vertices $v$ and $w$, which can not be connected in $G$ by $k$ pairwise edge-disjoint ttrails, such that a deletion of any $k-1$ edges form $G$ will not disconnect $v$ and $w$.
\\ \\
Due to the minimality of $G$, every edge $e$ of $G$ must belong to some (at least one) collection of $k$ different edges, deletion of all of which from $G$ disconnects $v$ and $w$ (otherwise deleting $e$ from $G$ produces a counter-example with less edges than $G$).
\\ \\
If there exists a collection $e_1,...,e_k$ of edges of $G$, such that after its deletion from $G$ vertices $v$ and $w$ become disconnected and the connected components $Con(v)$ of $v$ and $Con(w)$ of $w$ still both contain some edges, then denote by $v_1,...,v_k$ the adjacent vertices of the deleted $e_1,...,e_k$ which belong to $Con(v)$ and by $w_1,...,w_k$ their adjacent vertices which belong to $Con(w)$. Some of the vertices $v_1,...,v_k$ might coincide. Some of the vertices $w_1,...,w_k$ might also coincide. Construct graph $G'$ by taking $Con(v)$ and a new vertex $v'$ and connecting $v'$ by new edges $e''_1,...,e''_k$ to the vertices $w_1,...,w_k$ in $Con(w)$. Since each $G'$ and $G''$ has less edges than $G$ and since any deletion of any $k-1$ edges from one of them will not disconnect neither $v$ from $v'$ nor $w'$ from $w$, we get that $v$ is $k$-edge-connected to $v'$ in $G'$ and $w'$ is $k$-edge-connected to $w$ in $G''$. Thus, there exist $k$ pairwise edge-disjoint ttrails $p'_i$ going from $v$ to $v_i$, for $i=1,...k$, and $k$ pairwise edge-disjoint ttrails $p''_i$ going from $w_i$ to $w$, for $i=1,...k$. Thus, the ttrails $p_1,...,p_k$, where each $p_i$ is $p'_i,v_i,e_i,p''_i$, are pairwise edge-disjoint ttrails from $v$ to $w$. This contradicts our assumption that $v$ and $w$ can not be connected in $G$ by $k$ pairwise edge-disjoint ttrails.
\\ \\
Thus, the deletion of any collection $e_1,...,e_k$ of edges of $G$, which disconnects $v$ and $w$ must leave either $Con(v)$ or $Con(w)$ (or both) with no edges. But if there exists any edge $e_1$ in $G$, which is not adjacent neither to $v$ nor to $w$, then, as discussed above, there exists some collection $e_1,e_2,...,e_k$ of $k$ edges, deleting which disconnects $v$ from $w$. But deleting $e_2,...,e_k$ does not disconnect $v$ from $w$. Hence deleting $e_1,e_2,...,e_k$ leaves some edges in both $Con(v)$ and $Con(w)$, which is impossible. Thus, every edge of a minimal counterexample $G$ is either going between $v$ and some vertex $v_i$ or between $w$ and some vertex $w_j$.
\\ \\
Let $u_1,...,u_h$ be all the vertices of $G$, which are different from $v$ and $w$. Let $e_1,...,e_a$ be the edges connecting $v$ to $w$ (if such are present in $G$) and $e'_{i,1},...,e'_{i,b_i}$ and $e''_{i,1},...,e''_{i,c_i}$ be the edges connecting each $u_i$ with $v$ and $w$ respectively. If $b_i\ne c_i$ for some $i$ then deleting an edge between $u_i$ and $v$ (if $b_i>c_i$) or between $u_i$ and $e$ (if $b_i<c_i$) will prodice a counter-example with less edges than $G$, which contradicts the minimality of $G$. Thus, the number of edges between $v$ and each $u_i$ must be equal to the number of edges between that $u_i$ and $w$. But this implies that we have $k$ pairwise edge-disjoint ttrails from $v$ to $w$, which contradicts the fact that $G$ is a counter-example. \end{proof}
For the other proofs of Menger's Theorem see for example \cite{Menger} or \cite{Harju}.
\\ \\
The following well-known fact appears a lot in the literature, but usually no proof is provided:
\begin{lem} $k$-edge-connectivity is an equivalence relation between the vertices of $G$.
\end{lem}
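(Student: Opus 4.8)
The plan is to check the three defining properties of an equivalence relation. Reflexivity is immediate, since the definition explicitly declares every vertex to be $k$-edge-connected to itself. Symmetry is almost as cheap: given $k$ pairwise edge-disjoint ttrails $p_1,\dots,p_k$ between $v$ and $w$, the reversed ttrails $p_1^{-1},\dots,p_k^{-1}$ constructed in Section~\ref{Prl} are $k$ ttrails between $w$ and $v$, and reversing a ttrail does not change its set of edges, so these remain pairwise edge-disjoint. Hence the only real content is transitivity, which I expect to be the main obstacle.

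For transitivity, suppose $u$ is $k$-edge-connected to $v$ and $v$ is $k$-edge-connected to $w$, and we want $u$ to be $k$-edge-connected to $w$. If any two of $u,v,w$ coincide the conclusion follows at once from the hypotheses together with reflexivity, so I would assume $u$, $v$, $w$ are pairwise distinct. The strategy is to route everything through Menger's Theorem rather than to manipulate ttrails directly. As a preliminary step I would record that ordinary connectivity (the $k=1$ case) is transitive: concatenating a ttrail from $u$ to $v$ with a ttrail from $v$ to $w$ produces an alternating vertex--edge sequence from $u$ to $w$, and any such sequence contains a ttrail between its endpoints (extract a simple sub-walk, discarding repeated edges). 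Thus, in any graph in which $u$ is connected to $v$ and $v$ is connected to $w$, the vertices $u$ and $w$ are connected.

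Now let $S$ be an arbitrary set of $k-1$ edges of $G$. Applying Menger's Theorem to the hypothesis that $u$ and $v$ are $k$-edge-connected, the deletion of $S$ does not disconnect $u$ from $v$; applying it to the hypothesis on $v$ and $w$, the deletion of $S$ does not disconnect $v$ from $w$. By the preliminary step, in $G\setminus S$ the vertex $u$ is still connected to $w$. Since $S$ was an arbitrary collection of $k-1$ edges, no deletion of $k-1$ edges disconnects $u$ from $w$, so the converse direction of Menger's Theorem yields that $u$ and $w$ are $k$-edge-connected, establishing transitivity.

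The only points requiring care are the preliminary transitivity-of-connectivity observation and the bookkeeping of the degenerate cases where two of the three vertices coincide; the substantive part of the argument is a direct double invocation of Menger's Theorem, which is already available.
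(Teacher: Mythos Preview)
Your proof is correct and follows essentially the same route as the paper: reflexivity by definition, symmetry is immediate, and transitivity is obtained by applying Menger's Theorem twice (no deletion of $k-1$ edges disconnects $u$ from $v$ nor $v$ from $w$, hence none disconnects $u$ from $w$). Your extra care about the $k=1$ transitivity step and the degenerate cases is fine but not needed beyond what the paper leaves implicit.
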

\begin{proof} By definition, every vertex is $k$-edge-connected to itself. Clearly, if $v$ is $k$-edge-connected to $w$ then $w$ is $k$-edge-connected to $v$. We need to show that if $v$ is $k$-edge-connected to $w$ and $w$ is $k$-edge-connected to $u$ then $v$ is $k$-edge-connected to $u$. But, since no removal of any $k-1$ edges disconnects $v$ from $w$ nor $w$ from $u$, no removal of $k-1$ edges can disconnect $v$ from $u$.
\end{proof}
Thus $k$-edge-connectivity, for any positive integer $k$, breaks the set $V$ of $G$ into equivalence classes of $k$-edge-connected vertices.
\bde We denote the number of equivalence classes of $k$-edge-connected vertices of $G$ by $con_k(G)$.
\ede
\bde For a vertex $v\in V$ we denote the equivalence class of $k$-edge-connected vertices of $G$, which contains $v$, by $Con_k(v)$.
\ede
Note that $con_1(G)$ is exactly equal to the number $con(G)$ of the connected components of $G$. Note that $Con_1(v)$ is exactly the set of all vertices in $Con(v)$.
\bde A ttrail between two $k$-edge-connected vertices is called a $k$-weakly closed ttrail.
\ede
The rational behind this terminology becomes clear in the proof of Theorem \ref{weak.cycle} and in \cite{OI}. Every closed ttrail of $G$ is also a $k$-weakly closed ttrail of $G$ for every integer $k$. Conversely, since $G$ has a finite amount of edges, there exists some $k\le |E|+1$ such that any $k$-weakly closed ttrail of $G$ will also be a closed ttrail of $G$.
\bde A homological path between two $k$-edge-connected vertices of $G$ is called a $k$-weak homological cycle.
\ede
\bde The subspace of $\mathbb{F}_2^E$ spanned by all $k$-weak homological cycles of $G$ is called the $k$-Weak Cycle Space of $G$.
\ede
Clearly, the Cycle Space of $G$ is a subspace of the $k$-Weak Cycle Space of $G$ for every integer $k$.
\begin{thm} \label{weak.cycle} The dimension $dc_k(G)$ of the $k$-Weak Cycle Space of a graph $G$ is equal to $|E|-con_k(G)+con(G)$.
\end{thm}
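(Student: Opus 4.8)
The plan is to split the computation of $dc_k(G)$ into two parts via the boundary map $\delta$: the ordinary Cycle Space $Ker(\delta)$, whose dimension is supplied by Theorem~\ref{classicaldimension}, and the image under $\delta$ of the $k$-Weak Cycle Space. Write $S$ for the $k$-Weak Cycle Space. Since every homological cycle is in particular a $k$-weak homological cycle (take $v=w$), we have $Ker(\delta)\subseteq S$. Hence the restriction $\delta|_S$ has kernel exactly $Ker(\delta)$, and rank--nullity gives $\dim S=\dim Ker(\delta)+\dim\delta(S)=dc(G)+\dim\delta(S)$. Using Theorem~\ref{classicaldimension}, $dc(G)=|E|-|V|+con(G)$, so the whole problem reduces to showing $\dim\delta(S)=|V|-con_k(G)$.

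Next I would identify $\delta(S)$ explicitly. By definition $S$ is spanned by the vectors $\epsilon(p)\in\mathbb{F}_2^E$ for ttrails $p$ joining two $k$-edge-connected vertices $v,w$, and $\delta(\epsilon(p))=v+w$. Therefore $\delta(S)$ is exactly the span in $\mathbb{F}_2^V$ of the set $\{\,v+w:\ v\text{ and }w\text{ are }k\text{-edge-connected}\,\}$: every such pair is realized, because when $v\neq w$ a ttrail between them exists by the definition of $k$-edge-connectivity, and when $v=w$ the sum is $0$. Since $k$-edge-connectivity is an equivalence relation, this spanning set splits according to the classes $Con_k(\cdot)$: each generator $v+w$ is supported inside a single class, and conversely only vertices in a common class contribute. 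Within one class $C$ with $|C|=m$, the vectors $u+u'$ for $u,u'\in C$ span precisely the even-weight subspace of $\mathbb{F}_2^{C}$, which has dimension $m-1$ (a basis is $u_1+u_2,\dots,u_1+u_m$ for any fixed $u_1\in C$). Distinct classes occupy disjoint coordinate blocks of $\mathbb{F}_2^V$, so these contributions are linearly independent of one another. Summing over the $con_k(G)$ classes yields $\dim\delta(S)=\sum_{C}(|C|-1)=|V|-con_k(G)$.

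Combining the two steps, $\dim S=\bigl(|E|-|V|+con(G)\bigr)+\bigl(|V|-con_k(G)\bigr)=|E|-con_k(G)+con(G)$, which is the claim. The step that requires the most care, and which I expect to be the main obstacle, is the middle paragraph: one must check both that $\delta(S)$ is no larger than the span of the sums $v+w$ (immediate once one observes $S$ is generated by the $\epsilon(p)$'s) and that this span has the asserted dimension, the key point being that the even-weight subspace of $\mathbb{F}_2^{C}$ has codimension one in $\mathbb{F}_2^{C}$, cut out by the functional "sum of all coordinates". The remaining ingredients are entirely routine: the linear-algebra identity $\dim S=\dim Ker(\delta)+\dim\delta(S)$ valid whenever $Ker(\delta)\subseteq S$, together with the already-established Theorem~\ref{classicaldimension}.
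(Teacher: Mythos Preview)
Your proof is correct and takes a genuinely different route from the paper's. The paper argues by constructing the quotient graph $G'$ obtained from $G$ by collapsing each $k$-edge-connectivity class to a single vertex; then $G'$ has $con_k(G)$ vertices, $|E|$ edges, and $con(G)$ components, and one observes that the $k$-Weak Cycle Space of $G$ coincides with the ordinary Cycle Space of $G'$, so Theorem~\ref{classicaldimension} applied to $G'$ gives the formula immediately. Your argument instead stays inside $G$ and uses rank--nullity on $\delta|_S$: you split $\dim S$ as $dc(G)+\dim\delta(S)$, invoke Theorem~\ref{classicaldimension} for the first summand, and compute $\delta(S)$ directly as the direct sum over the classes $C$ of the even-weight subspaces of $\mathbb{F}_2^{C}$. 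The two approaches are essentially dual: the paper hides the computation of $\delta(S)$ inside the combinatorics of the quotient graph, while you make it explicit. Your version has the advantage of being entirely linear-algebraic and self-contained, avoiding the (slightly informal) identification of cycle spaces under the quotient; the paper's version is shorter and more conceptual once that identification is accepted. One small remark: strictly speaking, by the paper's definition $S$ is spanned by \emph{all} homological paths between $k$-edge-connected vertices, not just by $\epsilon(p)$ for ttrails $p$; but since every such generator has boundary $v+w$, your description of $\delta(S)$ is unaffected and the argument goes through verbatim.
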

\begin{proof} Let $G'$ be the graph, obtained from $G$ by identifying (gluing together) all the vertices in each $k$-edge-connectivity equivalence class of $G$. Then $G'$ has $con_k(G)$ vertices, $|E|$ edges, and $con(G)$ connected components. Clearly, each $k$-weakly closed ttrail of $G$ becomes a closed ttrail of $G'$ and, conversely, any pre-image of every closed ttrail of $G'$ in $G$ is a $k$-weakly closed ttrail of $G$. Applying Theorem \ref{classicaldimension} to $G'$ yields our theorem.
\end{proof}
\begin{lem} \label{weak.hom.decomp} Every $k$-weak homological cycle $c$ of $G$ is a sum $\epsilon(p_1)+...+\epsilon(p_m)$, where each $p_j=v_{j,1},e_{j,1},...,v_{j,t_j},e_{j,t_j}$ is a $k$-weakly closed ttrail from some vertex $w_j$ to some (not necessarily different) vertex $w'_j$, such that no two of its vertices $v_{j,a}$ and $v_{j,b}$, where $a\ne b$, are $k$-edge-connected.
\end{lem}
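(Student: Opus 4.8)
The plan is to reduce to the quotient graph $G'$ obtained, as in the proof of Theorem~\ref{weak.cycle}, by gluing together all the vertices in each $k$-edge-connectivity class of $G$; since $G$ and $G'$ have the same set of edges, $\mathbb{F}_2^E$ is unchanged, and because the two endpoints of $c$ lie in one class the boundary of $c$ becomes $0$ in $\mathbb{F}_2^{V(G')}$, so $c$ is a homological cycle of $G'$. Applying Lemma~\ref{homologicalcyclesum} inside $G'$ writes $c=\epsilon(C_1)+\dots+\epsilon(C_n)$ with the $C_i$ cycles of $G'$, and by linearity it is enough to realize $\epsilon(C)=\{f_1,\dots,f_t\}$, for a single cycle $C=W_1,f_1,\dots,W_t,f_t$ of $G'$ with the classes $W_j$ pairwise distinct, as a sum of the desired special $k$-weakly closed ttrails of $G$. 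Looking at the subgraph $H$ of $G$ spanned by $f_1,\dots,f_t$: every vertex of $H$ has degree at most $2$ (a vertex of the class $W_j$ can only be an endpoint of $f_{j-1}$ or $f_j$), so $H$ is a disjoint union of paths and cycles, and since the $W_j$ are distinct one checks that either $H$ is a single cycle of $G$ whose vertices are pairwise non-$k$-edge-connected --- which is already one special $k$-weakly closed ttrail --- or $H$ is a union of edge-disjoint paths $\pi_1,\dots,\pi_s$ (each corresponding to a maximal arc of $C$) whose endpoints pair off, paired vertices lying in a common $k$-edge-connectivity class, the ``seams'' of $C$.

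To finish one must glue the $\pi_l$ together; because two path-endpoints in a class $W$ are $k$-edge-connected but usually distinct in $G$, one cannot simply concatenate them, and the natural fix --- inserting a ttrail inside $W$ joining those two vertices --- introduces edges outside $\{f_1,\dots,f_t\}$ that must cancel in the end. I would handle this by an induction on the number of edges, after first strengthening the statement to: \emph{every element $c$ of the $k$-Weak Cycle Space of $G$ (equivalently, by Theorem~\ref{weak.cycle}, of the Cycle Space of $G'$) is a sum of $\epsilon(p)$'s with $p$ a special $k$-weakly closed ttrail}; this is stronger since every $k$-weak homological cycle has boundary vanishing in $\mathbb{F}_2^{V(G')}$. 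In the inductive step I would pick, inside the subgraph $(V,E_c)$ spanned by $c$, a ttrail with pairwise non-$k$-edge-connected listed vertices having the maximal possible number of edges; a case analysis of why it cannot be extended at either end shows that either it itself, or the sub-ttrail of it running between two vertices that the extension would force to be $k$-edge-connected, is a special $k$-weakly closed ttrail $p$ with $\epsilon(p)\subseteq E_c$, so that $c+\epsilon(p)$ is again in the $k$-Weak Cycle Space, has strictly fewer edges, and the induction closes.

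I expect the main obstacle to be the one remaining case of that analysis: when the maximal ttrail runs between the two odd-degree vertices of $(V,E_c)$ and these are not $k$-edge-connected, no such $p$ is visible directly. Here the point is to exploit that, since the boundary of $c$ vanishes in $\mathbb{F}_2^{V(G')}$, every $k$-edge-connectivity class contains an even number of odd-degree vertices of $(V,E_c)$; decomposing $(V,E_c)$ into open trails plus closed trails and pairing the open-trail endpoints off within classes lets one peel several such trails together and shrink the edge count while staying inside the $k$-Weak Cycle Space. Making this pairing precise --- equivalently, showing that the unavoidable intra-class ``bridge'' ttrails always cancel in pairs --- is the part I expect to require the most care.
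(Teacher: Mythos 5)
Your reduction to the quotient graph $G'$ and the appeal to Lemma~\ref{homologicalcyclesum} there coincide with the paper's own argument, and the difficulty you isolate is real: a cycle of $G'$ through a class $W$ may enter at one vertex of $W$ and leave at a different one, so its edge set need not be $\epsilon$ of any ttrail of $G$ (the paper disposes of this by simply asserting a bijection between cycles of $G'$ and such ttrails of $G$). However, the repair you propose cannot be carried out, because the strengthened statement on which your induction rests is false. Take $k=3$ and the graph with vertices $u,u',x,y$, three parallel edges $a_1,a_2,a_3$ between $u$ and $u'$, three parallel edges $c_1,c_2,c_3$ between $x$ and $y$, and cross edges $b_1$ joining $u$ to $x$ and $b_2$ joining $u'$ to $y$. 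The $3$-edge-connectivity classes are $\{u,u'\}$ and $\{x,y\}$. A ttrail whose listed vertices are pairwise non-$3$-edge-connected has at most one listed vertex in each class, hence at most two edges; a two-edge one would start and end in one class with its middle vertex in the other, so both its edges would be cross edges sharing the middle vertex, while $b_1$ and $b_2$ are vertex-disjoint. Hence the only admissible ttrails are the one-edge ttrails on the $a_i$ and $c_i$, and every sum of their $\epsilon$'s avoids $b_1,b_2$; yet $\{b_1,b_2\}$ lies in the $3$-Weak Cycle Space (it is a cycle of $G'$, and equals the sum of the $3$-weak homological cycles $\{a_1\}$ and $\{a_1,b_1,b_2\}$). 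So your strengthened claim fails, and the inductive scheme collapses independently of how you would settle the pairing case you left open.

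In fact the same example shows the obstruction is not an artifact of your strategy: the edge set $\{b_1,a_1,b_2\}$, which is $\epsilon$ of the ttrail $x,b_1,u,a_1,u',b_2$, is a homological path between the $3$-edge-connected vertices $x$ and $y$, hence a $3$-weak homological cycle, and by the count above it is not a sum of $\epsilon(p)$'s over ttrails with pairwise non-$k$-edge-connected listed vertices; the intra-class ``bridge'' ttrails you hoped to cancel in pairs are unavoidable. What does hold, and what Theorem~\ref{four.one} actually needs, is the weaker conclusion in which the ttrails are only required to be short in the sense of Lemma~\ref{weak.space.dec}: decompose the support of $c$ (whose odd-degree vertices are exactly its two endpoints) into an Eulerian trail between them plus closed trails, and cut that trail using Lemma~\ref{weak.space.dec}; this writes $c$ as a sum of $\epsilon$'s of short $3$-weakly closed ttrails and of homological cycles. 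So the correct move is to relax the condition on the $p_j$ rather than to strengthen the statement; as written, your plan both relies on a false intermediate claim and leaves its hardest case unresolved.
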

\begin{proof} Again, let $G'$ be the graph, obtained from $G$ by identifying (gluing together) all the vertices in each $k$-edge-connectivity equivalence class of vertices of $G$. This gluing induces a bijection between the $k$-weak homological cycles of $G$ and the homological cycles of $G'$. It also induces a bijection between $k$-weakly closed ttrails of $G$, which satisfy the condition that no two vertices of a ttrail are $k$-edge-connected, and the cycles of $G'$. By Lemma \ref{homologicalcyclesum} every homological cycle $c'$ of $G'$ is a sum $\epsilon(c'_1)+...+\epsilon(c'_n)$, where $c'_1, ..., c'_n$ are some cycles of $G'$. Thus, every $k$-weak homological cycle $c$ of $G$ is a sum $\epsilon(p_1)+...+\epsilon(p_m)$ of ttrails $p_1, ..., p_m$, each of which goes between $k$-edge-connected vertices of $G$ and contains no two $k$-edge-connected vertices.
\end{proof}
Let $v_1,...,v_k\in V$ be (not necessarily pairwise distinct) vertices, belonging to the same $k$-edge-connected component of $V$.
\begin{lem} \label{ext.menger} If there exist $k$ pairwise edge-disjoint ttrails between a vertex $u$ and the vertices $v_1,...,v_k$ then $u$ is $k$-edge-connected to $v_1$.
\end{lem}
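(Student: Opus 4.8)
The plan is to reduce everything to Menger's Theorem. First I would dispose of the degenerate case in which $u$ coincides with one of $v_1,\dots,v_k$: then $u$ already belongs to their common $k$-edge-connected component, and since $k$-edge-connectivity is an equivalence relation, $u$ is $k$-edge-connected to $v_1$ and there is nothing to prove. So from that point on I would assume that $u$ is distinct from every $v_i$; in particular $u\ne v_1$, which is the situation Menger's Theorem addresses. By Menger's Theorem it then suffices to show that the deletion of any $k-1$ edges from $G$ leaves $u$ and $v_1$ connected.

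So fix arbitrary edges $f_1,\dots,f_{k-1}$ of $G$ and delete them, and let $p_i$ denote the given ttrail between $u$ and $v_i$. The key observation is a pigeonhole argument: since $p_1,\dots,p_k$ are pairwise edge-disjoint, each deleted edge $f_\ell$ lies in at most one of them, so at most $k-1$ of the ttrails $p_i$ are affected, and therefore at least one of them — call it $p_j$ — contains none of $f_1,\dots,f_{k-1}$. Hence $p_j$ survives as a ttrail between $u$ and $v_j$ in the graph with $f_1,\dots,f_{k-1}$ removed, so $u$ and $v_j$ are still connected there.

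Next I would use that $v_j$ and $v_1$ lie in the same $k$-edge-connected component: by the (easy direction of) Menger's Theorem, removing $k-1$ edges cannot disconnect them, so $v_j$ and $v_1$ remain connected as well. Splicing a surviving $u$–$v_j$ ttrail with a surviving $v_j$–$v_1$ ttrail — using transitivity of connectedness, which is the $k=1$ instance of the already-established fact that $k$-edge-connectivity is an equivalence relation — shows that $u$ and $v_1$ are connected in $G$ with $f_1,\dots,f_{k-1}$ deleted. Since the $f_\ell$ were arbitrary, Menger's Theorem now gives that $u$ is $k$-edge-connected to $v_1$.

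I do not expect a genuine obstacle here: the whole argument is one pigeonhole step combined with two invocations of Menger's Theorem. The only place requiring care is the bookkeeping around the degenerate situations — $u$ equal to some $v_i$, the $v_i$ not being pairwise distinct, or $v_j$ happening to equal $v_1$ (in which case the surviving $p_j$ already connects $u$ to $v_1$ directly) — together with the routine remark that two surviving ttrails meeting at $v_j$ can legitimately be concatenated into a single ttrail witnessing connectedness.
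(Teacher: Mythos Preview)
Your proposal is correct and follows essentially the same route as the paper's proof: the paper's terse ``Clearly, no deletion of $k-1$ edges can disconnect $u$ from all the vertices $v_1,\dots,v_k$'' is exactly your pigeonhole step, and the remaining two sentences of the paper's argument match your use of Menger's Theorem for the pair $v_j,v_1$ and then for $u,v_1$. Your version simply makes the degenerate cases and the transitivity-of-connectedness step explicit, which the paper leaves implicit.
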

\begin{proof} Clearly, no deletion of $k-1$ edges can disconnect $u$ from all the vertices $v_1,..,v_k$. By Menger's Theorem no deletion on $k-1$ edges can disconnect $v_1$ from any vertex of $v_1,..,v_k$. Thus, no deletion of $k-1$ edges can disconnect $u$ from $v_1$, which, by Menger's Theorem, implies that $u$ is $k$-edge-connected to $v_1$.\end{proof}
In this article we are interested in the case when $k=3$, because the structures and the results, obtained for that case, will provide the answers for the three main questions on this article.
\begin{thm} There exists an algorithm, linear in time and space with respect to $max(|E|,|V|)$, which finds all the $3$-edge-connected
components of $G$.
\end{thm}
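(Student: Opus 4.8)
The plan is not to construct such an algorithm from scratch --- this is the one exception to the self-containedness of the paper announced in the abstract --- but to reduce the task to a procedure already available in the literature. First I would make precise what the algorithm must output: the partition of $V(G)$ into the equivalence classes $Con_3(v)$, equivalently (by Menger's Theorem, proved above) the classes of vertices that cannot be pairwise separated by deleting two edges of $G$. Depth-first-search based algorithms producing exactly this in time and space linear in $\max(|E|,|V|)$ have been given by several authors; one classical approach reduces $3$-edge-connectivity to $3$-vertex-connectivity (Galil--Italiano), while more recent work gives direct linear-time algorithms that already handle multigraphs. I would cite the most convenient of these as the engine of the proof and restate its guarantees in our terminology.

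The only genuine work on our side is to check that our graphs, which are permitted loops and multiple edges, fall within the scope of the cited algorithm. Loops are irrelevant to $k$-edge-connectivity for every $k\ge 1$ and can be discarded in a single $O(|E|)$ pass. For parallel edges one uses the standard fact --- an immediate consequence of the max-flow/min-cut reading of Menger's Theorem --- that replacing a bundle of $m\ge 3$ edges joining two vertices by exactly $3$ parallel edges leaves every class $Con_3(v)$ unchanged; this too is carried out in time linear in $\max(|E|,|V|)$, and, if the cited algorithm insists on a simple graph, a further linear-time subdivision of the surviving duplicated edges removes all multiplicities while adding only $O(|E|)$ new vertices. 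After this preprocessing the cited algorithm applies verbatim, and transporting its output back along the obvious bijection on vertices yields the $3$-edge-connected components of $G$.

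The main --- essentially the only --- obstacle is bookkeeping: one must verify that the loop/parallel-edge reduction preserves the $3$-edge-connectivity relation exactly, and that the whole pipeline (reduction, then the cited routine, then the back-transport) stays within the claimed linear bound. Both follow straightforwardly from Menger's Theorem and from the way $Con_3$ was defined, and no new combinatorial idea is required; they should nonetheless be spelled out for completeness, together with the precise bibliographic pointer to the algorithm used.
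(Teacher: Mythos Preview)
Your proposal is correct and follows essentially the same approach as the paper: remove loops in linear time, then invoke a published linear-time $3$-edge-connectivity algorithm. The paper is slightly more concrete---it cites Tsin's algorithm \cite{Tsin} specifically, which already handles multigraphs, so your parallel-edge reduction and subdivision steps are unnecessary (though harmless) once that particular reference is used.
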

\begin{proof} We start by deleting all the loop edges of $G$, since the loop edges do not contribute to connectivity between different vertices. Next, we apply Tsin's algorithm from \cite{Tsin}. This optimal algorithm for $3$-edge-connectivity first executes another simple, linear time and linear space algorithm to remove all bridge edges - the edges whose removal increases the amount of connected components of $G$. Next it performs one pass over $G$ to determine a set of cut-pairs, whose removal leads to the 3-edge-connected components. An additional, final, pass determines all $3$-edge-connected components of $G$. Tsin's algorithm is simple, easy to implement and runs in linear time and space.
\end{proof}
\bde A $3$-weakly closed ttrail between vertices $v$ and $w$, where $v$ can be equal to $w$, is called short if none of its inner vertices are $3$-edge-connected to $v$.
\ede
Note, that by our definition, the trivial empty closed ttrail is short.
\begin{lem} \label{weak.space.dec} Every non-trivial $3$-weakly closed ttrail $c$ is a sum $c_1+...+c_m$ of some non-trivial short $3$-weakly closed ttrails $c_1,...,c_m$ and this decomposition is unique.
\end{lem}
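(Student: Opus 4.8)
The plan is to work at the level of positions along the ttrail. Write $c = v_1,e_1,v_2,e_2,\dots,v_n,e_n$, and let $v_{n+1}$ be the truncated final vertex; since $c$ is $3$-weakly closed, $v_{n+1}$ is $3$-edge-connected to $v_1$. I would first observe that expressing $c$ as a concatenation $c = c_1 + \dots + c_m$ of non-trivial ttrails is exactly the same as choosing break indices $1 = a_0 < a_1 < \dots < a_m = n+1$, where $c_j$ is the segment $v_{a_{j-1}},e_{a_{j-1}},\dots,v_{a_j-1},e_{a_j-1}$, a ttrail from $v_{a_{j-1}}$ to $v_{a_j}$ (strict monotonicity of the $a_i$ is what makes each $c_j$ non-trivial, using $n\ge 1$). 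The heart of the argument is then the single claim: the $c_j$ are all \emph{short} $3$-weakly closed ttrails if and only if the set of internal breaks $\{a_1,\dots,a_{m-1}\}$ equals $I := \{\,k : 1 < k < n+1,\ v_k \text{ is } 3\text{-edge-connected to } v_1\,\}$. Granting this claim, existence follows by taking the breaks to be $\{1\}\cup I\cup\{n+1\}$, and uniqueness follows because $I$ is intrinsic to $c$.

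Next I would prove the "if" direction of the claim. Assuming the internal breaks are exactly $I$, both endpoints of each $c_j$ are $3$-edge-connected to $v_1$ (this is immediate when the index is $1$ or $n+1$, and is the defining property of $I$ otherwise); since $3$-edge-connectivity is an equivalence relation, the two endpoints of $c_j$ are $3$-edge-connected, so $c_j$ is a $3$-weakly closed ttrail. For shortness, an inner vertex $v_k$ of $c_j$ satisfies $a_{j-1} < k < a_j$, hence $k\notin I$, so $v_k$ is not $3$-edge-connected to $v_1$; if it were $3$-edge-connected to the endpoint $v_{a_{j-1}}$ (which is $3$-edge-connected to $v_1$), transitivity would contradict this. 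Finally, the $c_j$ are pairwise edge-disjoint segments of $c$, so their concatenation is a legitimate ttrail and literally equals $c$.

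Then the "only if" direction, which simultaneously yields uniqueness. Suppose every $c_j$ is a short $3$-weakly closed ttrail. Chaining the relations "$v_{a_{j-1}}$ is $3$-edge-connected to $v_{a_j}$" for $j=1,\dots,m$ shows $v_1$ is $3$-edge-connected to every $v_{a_j}$, so every internal break index lies in $I$. Conversely, if $k\in I$ were not a break index, then $k$ would be an inner-vertex index of some $c_j$, i.e. $a_{j-1}<k<a_j$; shortness of $c_j$ says $v_k$ is not $3$-edge-connected to $v_{a_{j-1}}$, yet $v_k$ and $v_{a_{j-1}}$ are both $3$-edge-connected to $v_1$, a contradiction. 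Hence the internal breaks are precisely $I$, so the ordered sequence $(c_1,\dots,c_m)$ is forced.

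The work is essentially bookkeeping. The one point requiring care — and the reason I would phrase everything in terms of positions $k$ rather than vertex labels — is that a ttrail may revisit a vertex, so "$v_k$ is $3$-edge-connected to $v_1$" must be understood per occurrence; with that convention the indexing arguments above are unambiguous. The only non-trivial inputs are the transitivity of $3$-edge-connectivity (the equivalence-relation lemma proved earlier) and the earlier remark that a concatenation of pairwise edge-disjoint ttrails is again a ttrail, so I do not expect any genuine obstacle beyond keeping the indices consistent.
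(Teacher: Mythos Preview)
Your proof is correct and is essentially the same argument as the paper's: both identify the cut points as precisely those positions $k$ where $v_k$ lies in the $3$-edge-connected class of $v_1$, with the paper phrasing this as an induction (peel off the segment up to the smallest such $i$, then recurse) while you give the equivalent closed-form description of the break set $I$ directly. Your treatment of uniqueness is more explicit than the paper's (which simply declares it ``obvious''), but the underlying idea is identical.
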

\begin{proof} If $c$ is a $3$-weakly closed ttrail consisting only of one vertex and one edge then $c$ is obviously a short $3$-weakly closed ttrail. We proceed by induction on number of edges in $c$. Suppose that the lemma holds for all $3$-weakly closed ttrails with less than $m$ edges. Let $c=v_1,e_1,...,v_m,e_m$ be a $3$-weakly closed ttrail. If $c$ is short then we are done. Otherwise, let $i$, where $1<i$, be the smallest index such that $v_i$ is $3$-edge-connected to $v_1$. Define $c_1=v_1,e_1,...,v_{i-1},e_{i-1}$ and $c'=v_i,e_i,...,v_m,e_m$. Clearly, $c_1$ is a short $3$-weakly closed ttrail and $c=c_1+c'$. Now we apply the induction hypothesis to obtain the unique decomposition of $c'$ into the sum of short $3$-weakly closed ttrails. This gives us a decomposition of $c$ into a sum of short $3$-weakly closed ttrails and the uniqueness of this decomposition is obvious.
\end{proof}
Let $x,y,u,w$ be any (not necessarily distinct) vertices in the same $3$-edge-connected component of $V$.
\begin{thm} \label{theorem.twin} Let $c=x,e_1,...,v_h,e_h$ and $c'=u,e'_1,...,v'_t,e'_t$ be two short $3$-weakly closed ttrails,  $c$ goes from the vertex $x$ to the vertex $y$, $c'$ goes from the vertex $u$ to the vertex $w$. If $c$ and $c'$ have a common inner vertex, then $x=u$, $e_1=e'_1$, $y=w$, and $e_h=e'_t$.
\end{thm}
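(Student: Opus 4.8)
The plan is to study $c$ and $c'$ relative to the $3$-edge-connected component $K$ containing $x,y,u,w$, and to pin down the first and last edge of each. Since $c$ is short, none of its inner vertices is $3$-edge-connected to $x$; since $c'$ is short and $u$ is $3$-edge-connected to $x$, the same holds for the inner vertices of $c'$ (recall $3$-edge-connectivity is an equivalence relation). In particular the common inner vertex $z$ is not in $K$. Let $M$ be the connected component of the graph $G-K$ (delete the vertices of $K$ together with all edges meeting them) that contains $z$. Deleting the first and last edge of $c$ leaves the sub-ttrail $v_2,e_2,\dots,v_{h-1},e_{h-1}$, all of whose edges join inner vertices of $c$ and hence lie in $G-K$; so $v_2,\dots,v_h$ all lie in one component of $G-K$, which (containing $z=v_i$) is $M$. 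Thus $e_1$ joins $x\in K$ to $v_2\in M$ and $e_h$ joins $v_h\in M$ to $y\in K$, i.e. $e_1\neq e_h$ are two edges of $G$ between $K$ and $M$; the identical argument gives that all inner vertices of $c'$ lie in $M$ and that $e'_1,e'_t$ are edges between $K$ and $M$. (One caveat: as the paper treats a ttrail and its reverse interchangeably, I will in the end allow replacing $c'$ by $c'^{-1}$ to match the stated orientation; this is needed already when $c'=c^{-1}$ and $x\neq y$.)

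The heart of the proof is the claim that there are \emph{exactly} two edges of $G$ with one endpoint in $K$ and the other in $M$. There are at least two, namely $e_1$ and $e_h$. Suppose there were a third one, $m$, joining $p\in K$ to $p'\in M$ and distinct from $e_1,e_h$ (hence not an edge of $c$ at all, since the only edges of $c$ meeting $K$ are $e_1,e_h$). Let $P$ be a shortest path inside $M$ from $p'$ to the vertex set of the sub-ttrail $c_M:=v_2,e_2,\dots,v_{h-1},e_{h-1}$, and let $v^{*}=v_q$ (with $2\le q\le h$) be its endpoint on $c_M$; by minimality $P$ meets $c_M$ only at $v^{*}$, so $P$ and $c_M$ are edge-disjoint. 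Now consider the three ttrails out of $v^{*}$: the reverse of $P$ followed by $m$ (reaching $p$); the reverse of $c_{[2,q-1]}$ followed by $e_1$ (reaching $x$); and $c_{[q,h-1]}$ followed by $e_h$ (reaching $y$). Their edge sets are $E(P)\cup\{m\}$, $\{e_1,\dots,e_{q-1}\}$, and $\{e_q,\dots,e_h\}$, which are pairwise disjoint: the two portions of $c$ use disjoint ranges of indices; $m,e_1,e_h$ are pairwise distinct and are the only edges meeting $K$ among all those involved; and $P$ is edge-disjoint from $c_M$. Applying Lemma~\ref{ext.menger} with $k=3$ to these three pairwise edge-disjoint ttrails joining $v^{*}$ to the vertices $x,y,p$ of the $3$-edge-connected component $K$ shows that $v^{*}$ is $3$-edge-connected to $x$ — contradicting that $v^{*}=v_q$ is an inner vertex of the short ttrail $c$. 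Hence there are exactly two such edges.

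To finish: by the previous step $\{e_1,e_h\}$ and $\{e'_1,e'_t\}$ must both coincide with that unique pair of $K$–$M$ edges, so $\{e_1,e_h\}=\{e'_1,e'_t\}$. Each of these edges has a unique endpoint in $K$ — that of $e_1$ is $x$, of $e_h$ is $y$, of $e'_1$ is $u$, of $e'_t$ is $w$ — so matching the two descriptions of the pair gives either $e_1=e'_1,\ e_h=e'_t$ (whence $x=u,\ y=w$) or $e_1=e'_t,\ e_h=e'_1$, and in the latter case replacing $c'$ by $c'^{-1}$ reduces to the former. This yields $x=u$, $e_1=e'_1$, $y=w$, $e_h=e'_t$.

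I expect Step~2 to be the crux. The delicate point is producing three \emph{genuinely} edge-disjoint ttrails emanating from a single vertex of $c$: this is exactly why $P$ is taken to be a shortest path to $c_M$ (forcing edge-disjointness from $c_M$ for free) and why one then splits $c_M$ at the landing vertex $v^{*}$ and appends the two boundary edges $e_1,e_h$ to the two halves. Routine but necessary are the bookkeeping over the degenerate cases $h=2$, $q\in\{2,h\}$, $x=y$, and $P$ trivial, none of which causes trouble.
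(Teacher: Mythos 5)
Your proof is correct, but it takes a genuinely different route from the paper's. The paper argues locally: it picks the common inner vertex $v_i=v'_j$ with smallest index $i$ along $c$, forms three ttrails from $v_i$ to $x,u,w$ out of the initial segment of $c$ (reversed), the initial segment of $c'$ (reversed) and the terminal segment of $c'$, and applies Lemma~\ref{ext.menger} to contradict shortness, asserting that pairwise edge-disjointness can only fail when $i=2$ and $e_1=e'_1$; the symmetric argument handles the last edges. You instead establish a global structural fact: all inner vertices of $c$ and $c'$ lie in a single component $M$ of $G$ minus the $3$-edge-connected class $K$, and exactly two edges join $K$ to $M$ (a hypothetical third one, combined with a shortest path in $M$ to $c$ and the two halves of $c$ split at the landing vertex, yields three pairwise edge-disjoint ttrails to which Lemma~\ref{ext.menger} applies, again contradicting shortness), whence both ttrails must use the same pair of end edges. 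Both proofs hinge on Lemma~\ref{ext.menger}, but yours buys a fully explicit edge-disjointness verification -- precisely the point the paper's ``unless $i=2$ and $e_1=e'_1$'' dichotomy leaves unargued, since the initial segment of $c$ and the terminal segment of $c'$ can a priori share edges -- and the reusable byproduct that each twin class corresponds to exactly two boundary edges of its component $M$. Your orientation caveat is also well taken rather than a defect: taken literally, with $c'=c^{-1}$ and $x\ne y$ the conclusion $x=u$ fails, so the theorem is only true up to the paper's declared identification of $p$ with $p^{-1}$, i.e. as a statement about the unordered pair $\{e_1,e_h\}=\{e'_1,e'_t\}$, which is all that is used later (the definition of twins and Lemma~\ref{five.three}). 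Only a cosmetic nit: your $c_M$ as written ends at $v_{h-1}$ while you let $v^*$ range up to $v_h$; state explicitly that the target vertex set of the shortest path is $\{v_2,\dots,v_h\}$.
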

\begin{proof} Let $v_i=v'_j$ be the common inner vertex of $c$ and $c'$ with the smallest index $i$ amongst all such inner vertices $v_2,...,v_h$ of $c$. Then, unless $i=2$ and $e_1=e'_1$, three paths $p_1=x,e_1,...,v_{i-1},e_{i-1}$, $p_2=u,e'_1,...,v'_{j-1},e'_{j-1}$ and $p_3=v'_j,e'_j,...,v'_t,e'_t$ are pairwise edge-disjoint and connect between $v_i$ and the vertices $x,u,w$. Thus, by Lemma \ref{ext.menger}, $v_i$ is $3$-edge-connected to $x$, which contradicts the fact that $c$ is short. Since $e_1=e'_1$ we get that $x=u$. The same argument shows that $e_h=e'_t$ and $y=w$.
\end{proof}
\bde Short $3$-weakly closed ttrails are called twins if they have the same first and last edges.
\ede
Clearly, being twins defines an equivalence relation between the short $3$-weakly closed ttrails connecting vertices of the same $3$-edge-connected component of $V$.
\section{Balanced Functions from the Set of Edges $E$ to an Abelian Group $A$}
Let $G$ be an undirected graph and $A$ be an Abelian group.
\begin{thm} \label{four.one} If a function $f:E\rightarrow A$ takes the Cycle Space of $G$ to $0$ then $f$ is balanced. Conversely, if a function $f:E\rightarrow A$ is balanced then $f$ takes the $3$-Weak Cycle Space of $G$ into $A_2$ and takes the Cycle Space of $G$ to $0$.
\end{thm}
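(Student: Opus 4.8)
The plan is to dispose of the two implications separately, the first being immediate. If $f$ vanishes on $Ker(\delta)$ and $p=v_1,e_1,\dots,v_n,e_n$ is a closed ttrail, its edges are pairwise distinct, so $\epsilon(p)=\{e_1,\dots,e_n\}$ is a well-defined vector of $\mathbb{F}_2^E$ lying in $Ker(\delta)$ (as observed just before Lemma~\ref{homologicalcyclesum}); hence $f(e_1)+\dots+f(e_n)=f(\epsilon(p))=0$ and $f$ is balanced. Conversely, assume $f$ balanced. For the Cycle Space, take $c\in Ker(\delta)$ and apply Lemma~\ref{homologicalcyclesum}: inspecting its proof, the cycles $c_1,\dots,c_k$ produced there are pairwise edge-disjoint and their edge sets partition $c$; each $c_i$ is a closed ttrail, so $\sum_{e\in\epsilon(c_i)}f(e)=0$, and summing over the disjoint pieces gives $f(c)=0$.

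For the $3$-Weak Cycle Space the key preliminary fact is a ``weakly closed'' form of balance: if $x$ and $y$ are $3$-edge-connected and $Q_1,Q_2,Q_3$ are pairwise edge-disjoint ttrails from $x$ to $y$, then each $Q_i+Q_j^{-1}$ with $i\neq j$ is an honest closed ttrail whose edge set is the disjoint union $\epsilon(Q_i)\sqcup\epsilon(Q_j)$, so $f(\epsilon(Q_i))+f(\epsilon(Q_j))=0$; the three resulting equations force $f(\epsilon(Q_1))=f(\epsilon(Q_2))=f(\epsilon(Q_3))$ to be one common element of $A_2$. I would then reduce the general case to this. Combining Lemma~\ref{weak.hom.decomp} with Lemma~\ref{homologicalcyclesum} applied to the graph obtained by identifying each $3$-edge-connected class (the graph ``$G'$'' used in the proof of Theorem~\ref{weak.cycle}), every $c$ in the $3$-Weak Cycle Space is a disjoint union $\epsilon(p_1)\sqcup\dots\sqcup\epsilon(p_m)$ of edge sets of $3$-weakly closed ttrails $p_j$, no two vertices of which are $3$-edge-connected. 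Since $A_2$ is a subgroup it suffices to show $f(\epsilon(p))\in A_2$ for one such $p$, say from $x$ to $y$; writing $\epsilon(p)$ as the disjoint union of a simple path $P$ from $x$ to $y$ and finitely many cycles (which contribute $0$) reduces us further to $\epsilon(p)=\epsilon(P)$, with $P$ a simple path whose vertices lie in pairwise distinct $3$-edge-connected classes.

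The decisive step, and the one I expect to be hardest, is the purely combinatorial claim that such a $P$ extends to a triple of pairwise edge-disjoint ttrails from $x$ to $y$ — equivalently, that $x$ and $y$ remain $2$-edge-connected after deleting the edges of $P$. Granting it, take $Q_1=P$ and $Q_2,Q_3$ edge-disjoint from $P$ and from each other; then $f(\epsilon(P))$ is one of the three equal ``weakly closed'' values above, hence lies in $A_2$, completing the proof. To establish the extension claim I would argue by contradiction through Menger's Theorem: a failure would give an $(x,y)$-cut contained in $\epsilon(P)\cup\{e'\}$ for a single additional edge $e'$, which must have at least three edges since $x\sim_3 y$, and a parity count along $P$ then shows that at least three of those cut edges are edges of $P$, so $P$ crosses the cut at least three times. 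Because no two vertices of $P$ are $3$-edge-connected, each inner vertex at which $P$ visits the ``wrong'' side of the cut lies inside a pocket bounded by at most two edges and containing neither $x$ nor $y$ (if it contained $y$ that two-edge set would already separate $x$ from $y$, contradicting $x\sim_3 y$); since $P$ is a trail it can enter and leave any such pocket at most once, and tracking $P$ through these pockets is incompatible with three crossings of the large cut. This pocket bookkeeping is the only delicate ingredient; everything else is formal.
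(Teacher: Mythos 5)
Most of your outline is sound and, up to the decisive step, actually somewhat more careful than the paper's own write-up: the first implication, the Cycle Space part (edge-disjointness of the cycles in Lemma~\ref{homologicalcyclesum}), the reduction of an arbitrary element of the $3$-Weak Cycle Space to edge-disjoint pieces with boundary $x+y$, the further reduction to a simple path $P$ plus cycles, and the three-equation algebra forcing $2f(\epsilon(P))=0$ are all fine. Your ``decisive claim'' is also true, and it is in substance exactly what the paper proves: a short $3$-weakly closed ttrail (your $P$ qualifies, since its inner vertices are not $3$-edge-connected to $x$) shares edges with at most one of any three pairwise edge-disjoint $x$--$y$ ttrails, hence is itself a member of such a triple; the paper gets this from Lemma~\ref{weak.space.dec}, Theorem~\ref{theorem.twin} and Lemma~\ref{ext.menger}.

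The gap is your proposed proof of that claim. The pocket setup is correct as far as it goes: each inner vertex $v$ of $P$ lies in a set $B_v$ with $x,y\notin B_v$ and exactly two boundary edges, both necessarily edges of $P$, traversed once inward and once outward. But the concluding assertion, ``tracking $P$ through these pockets is incompatible with three crossings of the large cut,'' is where the whole difficulty sits, and it is not justified. The natural way to get a contradiction -- the inner vertex $u$ just after the first crossing lies on the $y$-side $T'$ of the cut $C$, so it must reach $y$ inside $G-C$, yet any such path leaves $B_u$ through $\partial B_u$ -- only works if both boundary edges of $B_u$ happen to be crossing edges of $C$. They are merely edges of $P$, and in general $P$ has many edges inside $S'$ or $T'$; a pocket may straddle $C$, its boundary edges may lie entirely on one side, and the pockets of different inner vertices may be nested or overlapping, so no finite ``bookkeeping'' is set up by what you wrote. (Your argument does close in the special case where every edge of $P$ lies in $C$, e.g.\ when $P$ has three edges, which is perhaps why it looks plausible.) Handling the general interaction between $P$ and the Menger paths is precisely what the twin machinery of Theorem~\ref{theorem.twin} does; either invoke that (together with Lemma~\ref{weak.space.dec} and Lemma~\ref{ext.menger}) at this point, or supply a genuinely new argument -- as it stands, the decisive step is asserted, not proved.
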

\begin{proof} Suppose that $f$ takes every homological cycle of $G$ to $0$.
Then for any closed ttrail $c=v_1,e_1,...,c_m,e_m$ of $G$ all its edges are distinct, so we have $$f(e_1)+...+f(e_m)=f(\epsilon(c))=0.$$ Thus, $f$ is balanced.
\\ \\
Due to Lemma \ref{weak.space.dec}, $f$ takes the $3$-Weak Cycle Space of $G$ into $A_2$ if and only if $f$ takes all short $3$-weak homological cycles of $G$ to some elements of $A_2$. Thus, to prove the second half of our theorem we consider only short $3$-weak homological cycles.
\\ \\
Suppose that $f$ is balanced. From Lemma \ref{homologicalcyclesum} it follows that $f$ takes every homological cycle of $G$ to $0$. Let $c$ be any short $3$-weakly closed ttrail of $G$, which goes between some vertices $v$ and $w$. Then there exist three pairwise edge-disjoint ttrails $p_1, p_2$ and $p_3$ from $v$ to $w$. If $c$ has a common edge with $p_3$ then, due to Lemma \ref{weak.space.dec} and Theorem \ref{theorem.twin}, $c$ must have a common first and last edges with one of the short $3$-weakly closed ttrails from the decomposition of $p_3$. This implies that $c$ can not have any common edges with $p_1$ or $p_2$. So $c$ can have common edges with at most one out of the three ttrails $p_1, p_2$ and $p_3$. Suppose $c$ does not have any common edges with $p_1$ and $p_2$. Then we obtain $f(c+p_1)=f(c+p_2)=f(p_1+p_2)=0$. So $f(c)+f(p_1)=f(c)+f(p_2)=f(p_1)+f(p_2)=0$.
Hence, $2f(c)=[f(c)+f(p_1)]+[f(c)+f(p_2)]-[f(p_1)+f(p_2))=0$.
So $f(\epsilon(c))\in A_2$. It now follows from Lemma \ref{weak.hom.decomp}, for $k=3$, that $f$ takes every $3$-weak homological cycle to an element of $A_2$.
\end{proof}
\begin{thm} \label{balanced.edge} The Abelian group $H(E,A)$ of all balanced functions from $E$ to $A$ is isomorphic to $A^{con_3(G)-con(G)}\times A_2^{|V|-con_3(G)}$
\end{thm}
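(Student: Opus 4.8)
The plan is to reduce the assertion to the computation of one finitely generated abelian group attached to $G$, and then to determine that group from the $3$-edge-connectivity structure of Section~3. First one may assume $G$ is connected: closed truncated trails stay inside connected components, so $H(E,A)\cong\prod_i H(E(G_i),A)$, and both exponents are additive over components. Next, extending a map $f\colon E\to A$ $\Z$-linearly to $\Z^E$, balancedness means $f(\epsilon(c))=0$ for every closed truncated trail $c$; by Lemma~\ref{homologicalcyclesum} the edge set of such a $c$ is an edge-disjoint union of simple cycles, so this is equivalent to $f$ vanishing on $\mathbf 1_\sigma$ for every simple cycle $\sigma$. Thus $H(E,A)=\operatorname{Hom}(M,A)$ with $M:=\Z^E/L$ and $L:=\langle\mathbf 1_\sigma:\sigma\text{ a simple cycle of }G\rangle$; since $\operatorname{Hom}(\Z,A)=A$ and $\operatorname{Hom}(\Z/2\Z,A)=A_2$, the theorem is equivalent to
\[
M\;\cong\;\Z^{\,con_3(G)-con(G)}\ \oplus\ (\Z/2\Z)^{\,|V|-con_3(G)} .
\]
The fundamental-cycle relations of a spanning tree $T$ express every non-tree edge's class through the $|V|-con(G)$ tree-edge classes, so $M$ is generated by those, and reducing mod $2$ gives $M/2M\cong\mathbb F_2^E/C$, of dimension $|V|-con(G)$ by Theorem~\ref{classicaldimension}.

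For the structure itself I would bring in the contracted graph $G'$, obtained by collapsing each $3$-edge-connected component of $G$ to a point: then $E(G')=E$, its Cycle Space equals the $3$-Weak Cycle Space $C_3$, and by Theorems~\ref{classicaldimension} and~\ref{weak.cycle} one has $\operatorname{codim}C_3=con_3(G)-con(G)$ and $\dim(C_3/C)=|V|-con_3(G)$. One checks from the $3$-edge-connectivity structure that $G'$ is a \emph{cactus} (each edge lies in at most one cycle). Split $T=E_1\sqcup E_2$ with $E_1$ a spanning tree of $G'$, so $|E_1|=con_3(G)-con(G)$ and $|E_2|=|V|-con_3(G)$. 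The key elementary fact, read off from the proof of Theorem~\ref{four.one}: if $c$ is a truncated trail between two $3$-edge-connected vertices, then, choosing three pairwise edge-disjoint trails between those vertices (at most one meeting $c$) and using Lemma~\ref{homologicalcyclesum}, one gets $2\,\epsilon(c)\in L$. Moreover one can pick $3$-weakly closed truncated trails $c_1,\dots,c_{|E_2|}$ compatible with $T$, meaning $\epsilon(c_j)$ consists of one designated edge of $E_2$ (the designations exhausting $E_2$) together with edges of $E_1$ only; this uses that from each endpoint of an $E_2$-edge one can run along $E_1$-edges to a vertex $3$-edge-connected to the corresponding vertex reached from the other endpoint (Lemma~\ref{ext.menger}, Theorem~\ref{theorem.twin}).

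With this in hand the proof finishes as follows. \emph{Free rank:} every simple cycle of $G$ reduces, under contraction, to a homological cycle of $G'$, hence lies in $L':=\langle\mathbf 1_{\sigma'}:\sigma'\text{ simple cycle of }G'\rangle$; since $G'$ is a cactus its simple cycles are pairwise edge-disjoint, so $\Z^E/L'$ is free of rank $|E|-\#\{\text{cycles of }G'\}=|E|-\dim C_3=con_3(G)-con(G)$, whence $\operatorname{rank}M\ge con_3(G)-con(G)$. Let $T_0\le M$ be generated by the classes $[\epsilon(c)]$ of all $3$-weakly closed truncated trails; by the fact above each such class has order dividing $2$, so $T_0$ is an elementary abelian $2$-group, and by the compatible choice of the $c_j$ the quotient $M/T_0$ is generated by the $con_3(G)-con(G)$ classes of $E_1$. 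A finitely generated group generated by $n$ elements and of rank $\ge n$ is $\cong\Z^n$, so $M/T_0\cong\Z^{\,con_3(G)-con(G)}$; hence $\operatorname{rank}M=con_3(G)-con(G)$ and $\operatorname{tors}M=T_0$. Since $M/T_0$ is free, $M\cong\Z^{\,con_3(G)-con(G)}\oplus T_0$, so $M/2M\cong(\Z/2\Z)^{\,con_3(G)-con(G)}\oplus T_0$ has dimension $|V|-con(G)$, forcing $\dim T_0=|V|-con_3(G)$. This gives the displayed isomorphism, hence the theorem.

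The delicate points are all in the last two steps: that $G'$ is a cactus, that the $c_j$ can be chosen compatibly with $T$, and — most of all — the description of the torsion of $M$. The source of the difficulty is that the $\mathbb F_2$-linear structure of $C\subseteq C_3\subseteq\mathbb F_2^E$ is incompatible with the $\Z$-module structure of $A^E$: for subsets $c,c'$ of $E$ one has $f(c+c')=f(c)+f(c')-2f(c\cap c')$, so vanishing on a basis of $C$ is strictly weaker than balancedness, and the discrepancy — which is $2$-divisible — is exactly what produces the $A_2$-summands. Showing this discrepancy is controlled (that it contributes exactly dimension $|V|-con_3(G)$, that it is killed by $2$ and not merely by a power of $2$, and that no odd torsion intrudes) is where the full $3$-edge-connectivity machinery of Section~3 is indispensable, above all the twin structure of Theorem~\ref{theorem.twin} together with Lemma~\ref{ext.menger}. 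Everything else — the reductions, the $\operatorname{Hom}$ reformulation, the mod-$2$ count, and the free-rank estimate via the cactus $G'$ — is comparatively routine.
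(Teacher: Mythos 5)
Your overall architecture is sound and genuinely different from the paper's: you reformulate $H(E,A)$ as $\operatorname{Hom}(\mathbb{Z}^E/L,A)$ and try to compute the integral lattice quotient $M=\mathbb{Z}^E/L$, whereas the paper never leaves $\mathbb{F}_2$ — it completes a basis of the Cycle Space by $3$-weak homological cycles and then by edges to a basis of $\mathbb{F}_2^E$, and checks directly, with Theorem~\ref{four.one} as the only substantive input, that evaluation on the added basis vectors is a bijection onto $A^{con_3(G)-con(G)}\times A_2^{|V|-con_3(G)}$. However, the two steps you yourself flag as delicate are genuine gaps, and one of them is false as stated. The ``compatible with $T$'' claim fails for a spanning tree $T$ fixed in advance: take $V=\{u_1,u_2,w_1,w_2\}$ with three parallel edges between $u_1,u_2$, three parallel edges between $w_1,w_2$, and single edges $u_1w_1$, $u_2w_2$; the $3$-edge-connected classes are $\{u_1,u_2\}$ and $\{w_1,w_2\}$. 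For the spanning tree $T=\{u_1u_2,\,u_1w_1,\,u_2w_2\}$, whichever of $u_1w_1,u_2w_2$ you place in $E_1$, the other one (say $u_2w_2\in E_2$) admits no $3$-weakly closed ttrail with edge set inside $E_1\cup\{u_2w_2\}$: the two candidate edges are vertex-disjoint and neither joins $3$-edge-connected vertices. So the statement must be ``for a suitably chosen $T$,'' and proving that a suitable $T$ (or any other generating set of $M/T_0$ of size $con_3(G)-con(G)$) always exists is real work — essentially the kind of selection of short ttrails the paper performs in Lemma~\ref{six.one} — which your one-line appeal to Lemma~\ref{ext.menger} and Theorem~\ref{theorem.twin} does not supply. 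Since this step is what bounds the number of generators of $M/T_0$, the determination of $M$, and hence the theorem, is not yet proved.

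The cactus claim is true but also needs an argument that appears nowhere (not in the paper either, since the paper never uses it): one must first show that no two distinct vertices of the contracted graph $G'$ are $3$-edge-connected in $G'$ (a lifting argument: a $2$-edge cut of $G$ separating two classes also separates them in $G'$, because paths in $G'$ can be patched inside classes, where Menger protects against the removal of two edges), and then that a graph with no $3$-edge-connected pair has every edge in at most one cycle (if $e=xy$ lay on two cycles, a maximal subpath of the second $x$--$y$ path avoiding the first cycle, together with the two arcs of the first cycle between its endpoints, gives three pairwise edge-disjoint paths between two distinct vertices). A further, minor, repair: your ``key elementary fact'' $2\epsilon(c)\in L$ is only justified by the argument of Theorem~\ref{four.one} when $c$ is a \emph{short} $3$-weakly closed ttrail (that is where Theorem~\ref{theorem.twin} yields ``at most one of $p_1,p_2,p_3$ meets $c$''); for a general $3$-weakly closed ttrail you must first decompose it via Lemma~\ref{weak.space.dec} into edge-disjoint short pieces. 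With these points filled in, your route does give the theorem — and in fact yields the stronger integral statement about $M$ — but as written the proof is incomplete exactly where the paper's $\mathbb{F}_2$-basis argument does its work.
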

\begin{proof} Select and fix a basis $c_1,c_2,...$ of the Cycle Space of $G$. Next, select and fix some $3$-weak homological cycles $c'_1,c'_2,...$ of $G$ so that $c_1,c_2,...,c'_1,c'_2,...$ is a basis of the $3$-Weak Cycle Space of $G$. Denote the subspace of the $3$-Weak Cycle Space of $G$, which is spanned by $c'_1,c'_2,...$, by $\Lambda$. Notice that the intersection of $\Lambda$ and the Cycle Space of $G$ contains only the zero vector. From Theorems \ref{classicaldimension} and \ref{weak.cycle} it follows that the basis $c_1,c_2,...,c'_1,c'_2,...$ consists of $|E|-|V|+con(G)$ vectors $c_1,...,c_{|E|-|V|+con(G)}$ and of $(|E|-con_3(G)+con(G))-(|E|-|V|+con(G))=|V|-con_3(G)$ vectors $c'_1,...,c'_{|V|-con_3(G)}$.
\\
Next, select and fix some edges $e_1,...,e_{con_3(G)-con(G)}$ from $E$ so that $c_1,c_2,...,c_{|E|-|V|+con(G)},c'_1,c'_2,...,c'_{|V|-con_3(G)},e_1,...,e_{con_3(G)-con(G)}$ is a basis of $\mathbb{F}_2^E$. Finally, define a homomorphism $$\phi:H(E,A)\rightarrow A^{con_3(G)-con(G)}\times A_2^{|V|-con_3(G)}$$ by
$$\phi(f)=(f(e_1),...,f(e_{con_3(G)-con(G)}),f(c'_1),...,f(c'_{|V|-con_3(G)})).$$
Theorem \ref{balanced.edge} assures that $\phi$ maps $H(E,A)$ to $A^{con_3(G)-con(G)}\times A_2^{|V|-con_3(G)}$.
\\ \\
If $\phi(f)=0$ then $f$, as a balanced function, must be zero on every $c_1,...,c_{|E|-|V|+con(G)}$ and also must be zero on every $c'_1,...,c'_{|V|-con_3(G)}$ and every $e_1,...,e_{con_3(G)-con(G)}$, by the definition of $\phi$. So $f$ must be zero on every element of $\mathbb{F}_2^E$. So $f$ is the zero function. Thus, $\phi$ is one-to-one.
\\ \\
Now we show that $\phi$ is onto. For every element $$\textbf{a}=(a_1,...,a_{con_3(G)-con(G)},a'_1,...,a'_{|V|-con_3(G)})$$ of $A^{con_3(G)-con(G)}\times A_2^{|V|-con_3(G)}$ we construct $f\in H(E,A)$ such that $\phi(f)=\textbf{a}$ by extending
\begin{enumerate}
\item $f(e_t)=a_t$ for all $e_1,...,e_{con_3(G)-con(G)}$,\\
\item $f(c'_j)=a'_{j}$ for all $c'_1,...,c'_{|V|-con_3(G)}$,\\
\item $f(c_i)=0$ for all $c_1,...,c_{|E|-|V|+con(G)}$
\end{enumerate}
$\mathbb{F}_2$-linearly to the entire $\mathbb{F}_2^E$. Requirements (2) and (3), together with Theorem \ref{balanced.edge}, assure that $f$ is balanced. A straightforward plugging-in shows that $\phi(f)=\textbf{a}$.
\end{proof}
\section{Balanceable Functions from the Set of Vertices $V$ to an Abelian Group $A$}
\begin{lem} \label{if.balanceable} If a function $g:V\rightarrow A$ is balanceable then for any $3$-edge-connected vertices $v$ and $w$ of $G$, $g(v)-g(w)\in 2A$
\end{lem}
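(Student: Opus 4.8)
The plan is to work directly from the definition of balanceability. Fix a function $f:E\rightarrow A$ that balances $g$, so that the signed sum $S(c):=g(v_1)+f(e_1)+\cdots+g(v_n)+f(e_n)$ vanishes for every closed ttrail $c=v_1,e_1,\ldots,v_n,e_n$ of $G$. If $v=w$ the claim is trivial (and every vertex is $3$-edge-connected to itself), so assume $v\neq w$; then, since $v$ and $w$ are $3$-edge-connected, there exist three pairwise edge-disjoint ttrails $p_1,p_2,p_3$ between $v$ and $w$, each oriented from $v$ to $w$.

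The first key step is the elementary identity $S(p)-S(p^{-1})=g(v)-g(w)$, valid for every ttrail $p$ from $v$ to $w$. I would verify this by writing out $S(p^{-1})$ straight from the definition of $p^{-1}$: reversing a ttrail leaves the multiset of its edges and the multiset of its inner vertices unchanged, and only replaces the contribution $g(v)$ of the initial vertex by $g(w)$; hence every $f$-term and every inner $g$-term cancels in the difference $S(p)-S(p^{-1})$, leaving $g(v)-g(w)$. This bookkeeping with the reversal is the one slightly delicate point, and it is essentially the only obstacle.

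The second step is to exploit the three ttrails. For $i\neq j$ the concatenation $p_i+p_j^{-1}$ is a genuine closed ttrail from $v$ to $v$ (it is edge-disjoint precisely because $p_i$ and $p_j$ are), and by construction $S(p_i+p_j^{-1})=S(p_i)+S(p_j^{-1})$; balancedness of the pair $(g,f)$ therefore gives $S(p_i)+S(p_j^{-1})=0$ for all $i\neq j$. Writing $s_i=S(p_i)$ and $\bar s_i=S(p_i^{-1})$, the six relations $s_i+\bar s_j=0$ ($i\neq j$) force $s_1=s_2=s_3$ and $\bar s_1=\bar s_2=\bar s_3=-s_1$; this is exactly where three edge-disjoint ttrails are needed, since two of them yield too few relations to pin the sums down. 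Combining this with the identity from the previous step, $g(v)-g(w)=s_1-\bar s_1=s_1-(-s_1)=2s_1\in 2A$, which is the assertion. The takeaway is that the difference $g(v)-g(w)$ comes out as exactly twice the signed sum $S(p_1)$ along one of the ttrails.
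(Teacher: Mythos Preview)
Your proof is correct and follows essentially the same route as the paper's own argument: pick three pairwise edge-disjoint ttrails between $v$ and $w$, concatenate pairs to form closed ttrails on which the combined sum vanishes, and take a suitable linear combination of the resulting equations. The paper carries this out slightly more directly---computing $S(c_1)+S(c_2)-S(c_3)$ for $c_1=p_1+p_2^{-1}$, $c_2=p_1+p_3^{-1}$, $c_3=p_2+p_3^{-1}$ to obtain $g(v)+g(w)\in 2A$---whereas you first isolate the reversal identity $S(p)-S(p^{-1})=g(v)-g(w)$ and then solve the system $s_i+\bar s_j=0$; but the underlying computation is the same.
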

\begin{proof} If $v=w$ then the lemma is trivial. Otherwise, let $f:E\rightarrow A$ be a function that balances $g$. Let $p_1, p_2$ and $p_3$ be three pairwise edge-disjoined paths from $v$ to $w$. Then the sums of values of $g$ and $f$ along closed ttrails $c_1=p_1,p^{-1}_2$ and $c_2=p_1,p^{-1}_3$ and $c_3=p_2,p^{-1}_3$ must be $0$. But the sum of values of $g$ and $f$ along $c_1$ plus the sum of values of $g$ and $f$ along $c_2$ minus the sum of values of $g$ and $f$ along $c_3$ is equal to $g(v)+g(w)+2a$, where $a$ is the sum of values of $f$ on all edges of $p_1$ plus the sum of values of $g$ on all inner vertices of $p_1$. Hence, $g(v)-g(w)=g(v)+g(w)-2g(w)=-2a-2g(w)$ is an element of $2A$.
\end{proof}
\begin{lem} \label{five.two} For any $v\in V$ and $a\in A$, the function $g:V\rightarrow A$, such that $g(v)=2a$ and that $g(w)=0$ for all other vertices $w\in V$, is balanceable.
\end{lem}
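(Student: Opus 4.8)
\section*{Proof proposal}

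The plan is to exhibit an explicit edge function $f:E\to A$ that balances $g$, obtained by spreading the value $2a$ sitting at $v$ evenly over the edges meeting $v$. Concretely: if $v$ has no incident edges, take $f\equiv 0$ (then $g$ never contributes to any closed ttrail sum and we are done); otherwise set $f(e)=-a$ for every non-loop edge $e$ incident to $v$, set $f(e)=-2a$ for every loop $e$ at $v$, and $f(e)=0$ for every edge not incident to $v$. The claim is that this $f$ balances $g$, and the work is to verify the defining condition on an arbitrary closed ttrail.

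So I would fix a closed ttrail $c=v_1,e_1,\dots,v_n,e_n$, where $e_j$ joins $v_j$ and $v_{j+1}$ under the conventions $v_{n+1}=v_1$ and $e_0=e_n$, and let $m$ be the number of indices $j$ with $v_j=v$. Because $g$ vanishes away from $v$, the contribution of $g$ to the sum along $c$ is exactly $2am$, so everything reduces to showing $f(e_1)+\dots+f(e_n)=-2am$; by the definition of $f$ this is the assertion
\[
(\#\text{non-loop edges of }c\text{ at }v)\;+\;2\,(\#\text{loops of }c\text{ at }v)\;=\;2m .
\]
I would prove this identity by a double count of the list of incidences $(j,e)$ with $v_j=v$ and $e\in\{e_{j-1},e_j\}$, where each slot $j$ with $v_j=v$ contributes the two incidences $(j,e_{j-1})$ and $(j,e_j)$, counted separately even when $e_{j-1}=e_j$: on one side this list has $2m$ entries; on the other side, a given edge appears in $c$ at most once, say as $e_k$, and is incident in the list to whichever of the slots $k,k+1$ carry the vertex $v$ — hence once if it is a non-loop edge at $v$ (exactly one of $v_k,v_{k+1}$ equals $v$), twice if it is a loop at $v$ (both endpoints are $v$), and not at all otherwise. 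Summing over the edges of $c$ yields the displayed equality.

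Putting the pieces together, the sum of $g$ and $f$ along $c$ equals $2am-a\bigl((\#\text{non-loop edges of }c\text{ at }v)+2(\#\text{loops of }c\text{ at }v)\bigr)=2am-2am=0$, for every closed ttrail $c$; hence $f$ balances $g$ and $g\in B(V,A)$. The only genuinely delicate point is getting this incidence count right in the presence of loops at $v$ and of the wrap-around edge $e_n$ of a closed ttrail — which is exactly why loops at $v$ must receive weight $-2a$ while ordinary edges at $v$ receive weight $-a$; once that bookkeeping is pinned down, everything else is routine.
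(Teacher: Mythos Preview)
Your proof is correct and uses exactly the same balancing function $f$ as the paper (value $-a$ on non-loop edges at $v$, $-2a$ on loops at $v$, and $0$ elsewhere). The only difference is in the verification step: the paper checks the vanishing condition just on cycles---where $v$ occurs at most once, so the three cases ``no $v$'', ``$v$ with two non-loop edges'', ``$v$ with one loop'' are immediate---and then invokes Lemma~\ref{homologicalcyclesum} to pass to arbitrary closed ttrails; you instead carry out a direct incidence double-count on a general closed ttrail. Your route is slightly more self-contained (no appeal to the cycle decomposition lemma), while the paper's route avoids the mild bookkeeping with loops and the wrap-around edge; substantively they are the same argument.
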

\begin{proof} We define $f:E\rightarrow A$ by: $f(e)=-2a$ if $e$ is a loop edge adjacent to $v$; $f(e)=-a$ if $e$ is a non-loop edge adjacent to $v$;  $f(e)=0$ if $e$ is not adjacent to $v$. Any cycle of $G$ will either neither contain $v$ nor any of its adjacent edges, or will contain $v$ two of its adjacent non-loop edges, or will contain $v$ and its adjacent loop edge. In all these three cases we get that the sum of values of $g$ and $f$ along the image of that cycle under $\epsilon$ is $0$. Applying Lemma \ref{homologicalcyclesum} yields our lemma.
\end{proof}
\begin{lem} \label{five.three} For any $3$-edge-connected component $T$ of $V$ and any $a\in A$, the function $g:V\rightarrow A$, such that $g(w)=a$ for all $w\in T$ and $g(u)=0$ for all other $u\in V$, is balanceable.
\end{lem}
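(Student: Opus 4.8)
The plan is to reduce to the case where $T$ is a single vertex and then to mimic the proof of Lemma~\ref{five.two} with a balancing function supported on the edges incident to that vertex. First I would form the graph $\tilde G$ by collapsing all of $T$ to one vertex $\tilde t$ (edges inside $T$ becoming loops at $\tilde t$, multiplicities retained) and set $\tilde g(\tilde t)=a$ and $\tilde g=0$ elsewhere. Each closed ttrail of $G$ projects to a closed ttrail of $\tilde G$ on the same edge set and passes through $\tilde t$ exactly as often as the original passes through vertices of $T$; hence any $\tilde f\colon E(\tilde G)\to A$ balancing $\tilde g$ also balances $g$ when read on $E(G)=E(\tilde G)$, so it is enough to treat $\tilde G,\tilde g$. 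For that I must know that $\{\tilde t\}$ is a $3$-edge-connected component of $\tilde G$ on its own: if instead $\tilde t$ were $3$-edge-connected to some $v\neq\tilde t$, there would be three pairwise edge-disjoint ttrails from $\tilde t$ to $v$ in $\tilde G$, and taking the final portion of each (after its last passage through $\tilde t$) yields three pairwise edge-disjoint ttrails in $G$ from vertices of $T$ to $v$, so by Lemma~\ref{ext.menger} $v\in T$, a contradiction. From now on $T=\{t\}$.

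The step I expect to be the real obstacle is the combinatorial claim that, for $\{t\}$ a $3$-edge-connected component on its own, at most two edges of $G$ join $t$ to any one connected component of $G-t$ (the graph obtained by deleting $t$ and its incident edges). I would argue by contradiction: if three distinct edges $e_1,e_2,e_3$ run from $t$ to a component $C$, with endpoints $u_1,u_2,u_3\in C$, then connectedness of $C$ supplies a vertex $b\in C$ together with three pairwise edge-disjoint ttrails inside $C$ from $b$ to $u_1,u_2,u_3$ respectively (take a path from $u_1$ to $u_2$, a path from $u_3$ to the first vertex $b$ it shares with the former, and split the former at $b$; coincidences among the $u_i$, or with $b$, cause no trouble). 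Prepending $e_1,e_2,e_3$ produces three pairwise edge-disjoint ttrails in $G$ between $b$ and $t$, so $b$ would be $3$-edge-connected to $t$ although $b\neq t$, a contradiction.

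With the claim in hand I would define $f\colon E\to A$ to vanish off the edges incident to $t$, to equal $-a$ on each loop at $t$, and, for every component of $G-t$ reached from $t$ by exactly two edges, to equal $-a$ on one of those two and $0$ on the other (an edge that is the unique link from $t$ to its component also gets $0$). Because $f$ is supported on edges at $t$, exactly as in Lemma~\ref{five.two} it suffices to verify $\sum(g+f)=0$ on cycles and then invoke Lemma~\ref{homologicalcyclesum}. A cycle avoiding $t$ uses no edge incident to $t$ and contributes $0$; a loop $t,\ell$ contributes $a+(-a)=0$; and a cycle through $t$ along two non-loop edges $e,e'$ at $t$ has the far endpoints of $e,e'$ in one component $C$ of $G-t$, so by the claim $\{e,e'\}$ is exactly the pair of edges from $t$ to $C$ and the cycle contributes $a+(-a)=0$. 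What is left is routine bookkeeping, so I anticipate no difficulty there.
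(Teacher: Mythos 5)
Your proof is correct, but it takes a genuinely different route from the paper. The paper works inside $G$ with the machinery of short $3$-weakly closed ttrails: it takes the twin equivalence classes of such ttrails joining vertices of $T$, uses Theorem~\ref{theorem.twin} to pick a distinguished first/last edge $u_i$ in each class, sets $f=-a$ on these edges, and then invokes Lemma~\ref{weak.space.dec} to match, along any closed ttrail, the occurrences of vertices of $T$ with the occurrences of the edges $u_i$. You instead contract $T$ to a single vertex $\tilde t$, show via Lemma~\ref{ext.menger} that $\{\tilde t\}$ is a $3$-edge-connectivity class of its own (so contraction does not create new $3$-edge-connections), prove the local fact that at most two edges can join $\tilde t$ to any one component of $\tilde G-\tilde t$ (again a Menger-type argument through Lemma~\ref{ext.menger}), and then balance by a function supported on the edges at $\tilde t$, checking cycles and finishing with Lemma~\ref{homologicalcyclesum} exactly as the paper itself does in Lemma~\ref{five.two}; the pull-back to $G$ along the contraction is immediate since closed ttrails of $G$ project to closed ttrails of $\tilde G$ with the same edges and with visits to $T$ becoming visits to $\tilde t$. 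What your approach buys is independence from the twin/short-ttrail apparatus (Theorem~\ref{theorem.twin} is used only implicitly through Lemma~\ref{ext.menger}) and a very concrete, local description of the balancing function; it also fits the contraction viewpoint the paper already uses in Theorem~\ref{weak.cycle} and Lemma~\ref{weak.hom.decomp}. What the paper's approach buys is that the construction stays in $G$ itself, and the twin-class bookkeeping it sets up here is reused in Section 6 (Lemma~\ref{six.one} and Theorem~\ref{six.three}), so the two proofs are not interchangeable in terms of later dependencies, only in terms of establishing this lemma. One small caveat: your final appeal to ``verify on cycles and invoke Lemma~\ref{homologicalcyclesum}'' silently uses, just as the paper's Lemma~\ref{five.two} does, that the decomposition of a closed ttrail's edge set is into edge-disjoint cycles and that the multiplicity of each vertex in a closed ttrail is determined by its degree in the edge set; this is supplied by the construction in the proof of Lemma~\ref{homologicalcyclesum}, so you are at the same level of rigor as the paper, but it is worth making explicit.
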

\begin{proof} Let $\theta_1,\theta_2,...$ be the equivalence classes of all twin $3$-weakly closed ttrails connecting (not necessarily different) vertices from $T$. It follows from Theorem \ref{theorem.twin} that in each class $\theta_i$ we can select some edge $u_i$, which is adjacent to a vertex from $T$ and which is not contained in any other class $\theta_j$. Indeed, Theorem \ref{theorem.twin} guarantees that the first and the last edges belong only to the $3$-weakly closed ttrails from that class.
\\ \\
We define $f:E\rightarrow A$ by $f(e)=-a$ if $e$ is one of the edges $u_1,u_2,...$ and $f(e)=0$ otherwise. From Lemma \ref{weak.space.dec} it follows that every closed ttrail will contain the same amount (could be zero) of vertices $T$ and the edges from $\{u_1,u_2,...\}$. Thus, the sum of values of $g$ and $f$ along any closed ttrail will be $0$, which implies that $g$ is balanceable.
\end{proof}
\begin{thm} \label{balanceable} The Abelian group $B(V,A)$ of all balanceable functions $g:V\rightarrow A$ is isomorphic to $A^{con_3(G)}\times (2A)^{|V|-con_3(G)}$
\end{thm}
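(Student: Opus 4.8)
The plan is to imitate the proof of Theorem \ref{balanced.edge}: construct an explicit isomorphism by choosing one representative vertex in each $3$-edge-connected component and recording, for every other vertex, the difference between its value and that of its representative. Write $m = con_3(G)$ and let $T_1,\dots,T_m$ be the $3$-edge-connected components of $V$. First I would fix a vertex $v_i\in T_i$ for each $i$, set $U = V\setminus\{v_1,\dots,v_m\}$ (so $|U| = |V| - m$), and for $w\in V$ let $i(w)$ denote the index with $w\in T_{i(w)}$. Then define
\[
\Psi\colon B(V,A)\longrightarrow A^{m}\times (2A)^{|V|-m},\qquad
\Psi(g) = \bigl((g(v_1),\dots,g(v_m)),\,(g(w)-g(v_{i(w)}))_{w\in U}\bigr).
\]
Lemma \ref{if.balanceable} guarantees that $g(w)-g(v_{i(w)})\in 2A$ for every $w\in U$, since $w$ and $v_{i(w)}$ are $3$-edge-connected, so $\Psi$ indeed takes values in the asserted group; and $\Psi$ is visibly a homomorphism, being assembled from evaluation maps.

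Next I would check injectivity: from $\Psi(g)$ one recovers $g(v_i)$ directly for each $i$, and $g(w) = g(v_{i(w)}) + \bigl(g(w)-g(v_{i(w)})\bigr)$ for $w\in U$, so $\Psi(g)=0$ forces $g=0$. For surjectivity, take an arbitrary element $\bigl((a_1,\dots,a_m),(b_w)_{w\in U}\bigr)$ of the target group. By Lemma \ref{five.three}, for each $i$ the function equal to $a_i$ on $T_i$ and $0$ elsewhere is balanceable; summing these $m$ functions and using that $B(V,A)$ is a subgroup of $A^V$ shows that the function $g_0$ given by $g_0(w)=a_{i(w)}$ for all $w\in V$ is balanceable. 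For each $w\in U$, since $b_w\in 2A$, Lemma \ref{five.two} shows that the function supported at the single vertex $w$ with value $b_w$ is balanceable. Adding $g_0$ to all of these single-vertex functions produces a balanceable function $g$; since the correction ``bumps'' are placed only at vertices of $U$, we get $g(v_i)=a_i$ for every representative and $g(w)=a_{i(w)}+b_w$ for every $w\in U$, whence $g(w)-g(v_{i(w)}) = b_w$ and $\Psi(g)$ is precisely the prescribed element. Thus $\Psi$ is an isomorphism.

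I do not anticipate a serious obstacle: the three preparatory lemmas carry all the real weight. The only point requiring a little care is the bookkeeping in the surjectivity step — one must attach the single-vertex corrections exclusively to the non-representative vertices, so that they do not perturb the values $g(v_i)$, and one must explicitly invoke the group structure of $B(V,A)$ (together with Lemmas \ref{five.two} and \ref{five.three}) to legitimize adding the balanceable pieces together.
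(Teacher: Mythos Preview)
Your proposal is correct and follows essentially the same approach as the paper: choose a representative vertex in each $3$-edge-connected component, define the isomorphism by recording the values at the representatives together with the differences at all other vertices, invoke Lemma~\ref{if.balanceable} for well-definedness, and use Lemmas~\ref{five.two} and~\ref{five.three} for surjectivity. Your surjectivity argument is in fact spelled out a bit more carefully than the paper's (which simply asserts that the two lemmas imply $g$ is balanceable), but the content is identical.
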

\begin{proof} Let $T_1,...,T_{con_3(G)}$ be all equivalence classes of $3$-edge-connected vertices of $V$ and let $w_1,...,w_{con_3(G)}$ be some vertices in these classes. Select and fix some indexing on the rest of the vertices of $V$. We will refer to them as $v_{con_3(G)+1},...,v_{|V|}$. For each vertex $v_i$, where $i>con_3(G)$, let $w(v_i)$ denote one of the vertices $w_1,...,w_{con_3(G)}$, which is $3$-edge-connected to $v_i$.
\\ \\
We define the isomorphism $$\psi:B(V,A)\rightarrow A^{con_3(G)}\times (2A)^{|V|-con_3(G)}$$ by
$$\psi(g)=(g(w_1),...,g(w_{con_3(G)}),g(v_{con_3(G)+1})-g(w(v_{con_3(G)+1})),...,g(v_{|V|})-g(w(v_{|V|})))$$
By Lemma \ref{if.balanceable}, the image of $\psi$ lies in $A^{con_3(G)}\times (2A)^{|V|-con_3(G)}$ and so $\psi$ is well defined.\\ \\
If $\psi(g)=(0,...,0)$ then it is clear that $g(v)=0$ for all the vertices $w_1,...,w_{con_3(G)}$ and also for all the vertices $v_{con_3(G)+1},...,v_{|V|}$. Thus, $\psi$ is injective.\\ \\
For each $(a_1,...,a_{con_3(G)},a''_1,...,a''_{|V|-con_3(G)})\in A^{con_3(G)}\times (2A)^{|V|-con_3(G)}$ we construct $g\in B(V,A)$, such that $$\psi(g)=(a_1,...,a_{con_3(G)},a''_1,...,a''_{|V|-con_3(G)})$$ as follows:
\begin{itemize}
\item For $w_1,...,w_{con_3(G)}$ set $g(w_j)=a_j\,\,$;
\item For $v_{con_3(G)+1},...,v_{|V|}$ set $g(v_i)=a''_{i-con_3(G)}+g(w(v_i))\,$.
\\
\end{itemize}
Lemmas~\ref{five.two} and~\ref{five.three} imply that $g$ is balanceable. A straightforward plugging-in shows that $\psi(g)=(a_1,...,a_{con_3(G)},a''_1,...,a''_{|V|-con_3(G)})$.
\end{proof}
In~\cite{NI} it is proved that for a finite group $A$ and a connected graph $G$, $|W(V\bigcup E,A)|=|A|^{|V|+con_3(G)-1}$. This result now follows from our Lemma~\ref{quotient.group} and Theorems~\ref{balanced.edge} and~\ref{balanceable}, based on the obvious fact that $|A|=|A_2|\cdot |2A|$. In the next section we prove a stronger result in Theorem~\ref{six.three}.
\section{Balanced Functions from the Graph $G$ to an Abelian Group $A$}
Notice that when in this section we speak of the value of a function $h\in W(V\bigcup E,A)$ along a ttrail $p$ from vertex $v$ to vertex $w$, we mean the sum of all values of $h$ on vertices and edges of $p$.
\\ \\
Notice that for any balanced function $h$, any two $2$-edge-connected vertices $v$ and $w$, and any two edge-disjoint paths $p'$ and $p''$ from $v$ to $w$, we must always have $h(p')+h(p'')=h(v)-h(w)$, since $h$ is balanced and $0=h(p')+h(p''^{-1})=h(p)+(h(w)+h(p'')-h(v))$.
\\ \\
Notice that the dimension of the space $\Lambda$ (which was defined in the proof of Theorem~\ref{balanced.edge}) is equal to the total number of vertices of $G$ minus the number of the vertex representatives selected for each $3$-weak connected class and this ``coincides'' with the number of vertices $v_{con_3(G)+1},...,v_{|V|}$, which appeared in the proof of Theorem~\ref{balanceable}. Actually, $\Lambda$ is just a lift back to the $3$-Weak Cycle Space of $G$ of its quotient by its subspace $-$ the Cycle Space of $G$. Obviously, by making selections in the proof of Theorem~\ref{balanced.edge}, we have effectively selected and fixed a specific lift.
\begin{lem} \label{six.one} For every $3$-edge-connected component $\{v_1,...,v_m\}$ of $G$ containing $m$ vertices, we can select some $m-1$ short $3$-weakly closed ttrails $s_1,...,s_{m-1}$ between these vertices so that:
\begin{enumerate}
\item All the vertices $v_1,...,v_m$ are pairwise connected by these $s_1,...,s_{m-1}$ and/or by their sums;
\item For any two distinct $3$-edge-connected vertices $v_i$ and $v_j$, connected by some $s_r$, there are two other ttrails $p_1$ and $p_2$ between $v_i$ and $v_j$, such that $s_r, p_1$ and $p_2$ are pairwise edge-disjoint.
\end{enumerate}
\end{lem}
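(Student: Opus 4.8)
The plan is to reduce the statement to finding a spanning tree in a suitable auxiliary multigraph built on the $m$ vertices of the component, and then to read conditions (1) and (2) off results already established. The only genuinely delicate point will be checking that the short ttrails attached to the tree edges are pairwise edge-disjoint.

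First I would introduce the auxiliary multigraph $\hat G$ with vertex set $\{v_1,\dots,v_m\}$, placing one edge between $v_i$ and $v_j$ for each non-trivial short $3$-weakly closed ttrail joining $v_i$ to $v_j$ with $i\neq j$ (short $3$-weakly closed ttrails from a vertex of the component to itself are discarded). The first thing to prove is that $\hat G$ is connected. Given two distinct vertices $v_i,v_j$ of the component, they lie in the same connected component of $G$, so there is a path $P$ between them in $G$; since its endpoints are $3$-edge-connected, $P$ is a $3$-weakly closed ttrail, so by Lemma~\ref{weak.space.dec} it is uniquely a concatenation of short $3$-weakly closed ttrails. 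Because a vertex is $3$-edge-connected to $v_i$ exactly when it belongs to $\{v_1,\dots,v_m\}$, the points at which this decomposition cuts $P$ are precisely the visits of $P$ to the component, and these are pairwise distinct since $P$ is a path; hence the pieces of $P$ are short $3$-weakly closed ttrails each joining two distinct vertices of the component, so they form a walk in $\hat G$ from $v_i$ to $v_j$. Thus $\hat G$ is connected, and I take $s_1,\dots,s_{m-1}$ to be the short $3$-weakly closed ttrails labelling the edges of a spanning tree of $\hat G$.

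Condition (1) follows at once: for distinct $v_i,v_j$ the unique tree-path between them singles out a sub-sequence of $s_1,\dots,s_{m-1}$ which, after orienting each piece suitably, concatenates to a ttrail from $v_i$ to $v_j$ (or, if one prefers, their $\mathbb{F}_2$-sum is a homological path between $v_i$ and $v_j$ by a telescoping computation with $\delta$). The point that needs care --- and which I expect to be the main obstacle --- is that two distinct spanning-tree ttrails $s_r\neq s_{r'}$ must be edge-disjoint for the concatenation to be a legitimate ttrail. This I would deduce from Theorem~\ref{theorem.twin} together with the observation that an inner vertex of a short $3$-weakly closed ttrail joining two vertices of the component can never itself lie in the component: a shared edge of $s_r$ and $s_{r'}$ would, after a short case analysis on whether that edge is a first, last, or interior edge of each, force $s_r$ and $s_{r'}$ to share an inner vertex and hence, by Theorem~\ref{theorem.twin}, to have the same pair of endpoints --- impossible for two distinct edges of a spanning tree.

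For condition (2), fix $s_r$ joining distinct vertices $v_i,v_j$ of the component. Being $3$-edge-connected, $v_i$ and $v_j$ admit three pairwise edge-disjoint ttrails $q_1,q_2,q_3$ between them, by the very definition of $3$-edge-connectivity. By the argument already carried out inside the proof of Theorem~\ref{four.one} (which invokes Lemma~\ref{weak.space.dec} and Theorem~\ref{theorem.twin}), a short $3$-weakly closed ttrail between $v_i$ and $v_j$ can share edges with at most one of $q_1,q_2,q_3$; discarding that one leaves two ttrails $p_1,p_2$ that are pairwise edge-disjoint and also edge-disjoint from $s_r$, and they are genuinely different from $s_r$ since $s_r$, joining distinct vertices, has at least one edge. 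This yields (2), and hence the lemma.
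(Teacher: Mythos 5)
Your proof is correct, but it takes a genuinely different route from the paper. The paper argues by induction on $m$: it picks one short $3$-weakly closed ttrail $s_m$ from the new vertex $v_{m+1}$ to some $v_i$, observes (via Lemma~\ref{weak.space.dec} and Theorem~\ref{theorem.twin}) that every ttrail from $v_{m+1}$ to $v_i$ sharing an edge with $s_m$ must contain the first edge of $s_m$, deletes that single edge, and invokes Menger's Theorem to keep two edge-disjoint ttrails $p_1,p_2$ avoiding $s_m$ entirely; the remaining $s_1,\dots,s_{m-1}$ come from the induction hypothesis. You instead build the auxiliary multigraph on the component whose edges are the short ttrails between distinct component vertices, prove its connectivity by cutting an arbitrary path at its visits to the component (which is exactly the unique decomposition of Lemma~\ref{weak.space.dec}), and take a spanning tree; condition (2) you obtain by recycling the ``a short ttrail can meet at most one of three pairwise edge-disjoint ttrails'' argument from the proof of Theorem~\ref{four.one}, rather than the paper's delete-one-edge-and-apply-Menger step. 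Both routes rest on the same two pillars (Lemma~\ref{weak.space.dec} and Theorem~\ref{theorem.twin}), but your version buys something the paper leaves implicit: by proving that distinct tree ttrails are pairwise edge-disjoint (your case analysis, including the single-edge case settled by the no-parallel-edges property of a tree, is the right one), you make condition (1) precise --- the ``sums'' really are ttrails/homological paths --- which is also the property actually used later in the linear-independence and extension arguments around Theorem~\ref{six.three}. The paper's induction, in exchange, is shorter and gets (2) in one stroke from Menger's Theorem without re-running the three-ttrail argument.
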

\begin{proof}
The lemma is clear, when $m=1$ or $m=2$. Assume that the lemma is true for any $3$-edge connected component of a graph containing of up to $m$ vertices. For a $3$-edge connected component $v_1,...,v_{m+1}$ we can always find some short $3$-weakly closed ttrail $s_m$ from $v_{m+1}$ to one of the vertices $v_i$. By Lemma \ref{weak.space.dec} and Theorem \ref{theorem.twin} every ttrail from $v_{m+1}$ to $v_i$, which contains any of the edges (and, hence, of the inner vertices) of $s_m$ must also contain the initial edge $e_1$ of $s_m$. Hence deleting $e_1$ from $G$ will cut all such ttrails. But, by the Menger's Theorem, this deletion must preserve at least two edge-disjoint ttrails $p_1$ and $p_2$ from $v_{m+1}$ to $v_i$. Now we apply the induction hypothesis to obtain short $3$-weakly closed ttrails $s_1,...,s_{m-1}$. Thus, we obtain $s_1,...,s_{m-1},s_m$ which satisfies the requirements of the lemma.
\end{proof}
The following lemma is combining and generalizing results stated in Theorem~\ref{four.one} and Lemmas~\ref{if.balanceable} and~\ref{five.two}.
\begin{lem} \label{six.two} If a function $h:V\bigcup E\rightarrow A$ is balanced then for every $3$-edge connected vertices $v_i$ and $v_j$ and every short $3$-weakly closed ttrail $s_r$ from $v_j$ to $v_i$, $2h(s_r)=h(v_i)-h(v_j)$
\end{lem}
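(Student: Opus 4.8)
The plan is to imitate the proofs of Theorem~\ref{four.one} and Lemma~\ref{if.balanceable}. If $v_i=v_j$ then $s_r$ is an honest closed ttrail, so $h(s_r)=0$ because $h$ is balanced, while $h(v_i)-h(v_j)=0$, and there is nothing to prove; so assume $v_i\neq v_j$, whence $s_r$ is non-trivial. Since $v_i$ and $v_j$ belong to the same $3$-edge-connected component, Menger's Theorem provides three pairwise edge-disjoint ttrails $p_1,p_2,p_3$ from $v_j$ to $v_i$. Arguing exactly as in the proof of Theorem~\ref{four.one} (or as in the proof of Lemma~\ref{six.one}), the short ttrail $s_r$ can meet at most one of $p_1,p_2,p_3$: were $s_r$ to share an edge with, say, $p_3$, then by Lemma~\ref{weak.space.dec} and Theorem~\ref{theorem.twin} the very first edge of $s_r$ would already lie on $p_3$, and then $s_r$ could meet neither $p_1$ nor $p_2$. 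After relabelling I may therefore assume $s_r$ is edge-disjoint from both $p_1$ and $p_2$.

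Next I would form the three alternating sequences $c_1=s_r+p_1^{-1}$, $c_2=s_r+p_2^{-1}$ and $c_3=p_1+p_2^{-1}$. In each case the two pieces have no common edge, so each of $c_1,c_2,c_3$ is a genuine closed ttrail, and hence $h(c_1)=h(c_2)=h(c_3)=0$ because $h$ is balanced. Combining these vanishings with the elementary identity noted before Lemma~\ref{six.one} — that $h(q)+h(q')=h(u)-h(u')$ for any two edge-disjoint ttrails $q,q'$ from a vertex $u$ to a vertex $u'$ — yields three integer-linear relations among $h(s_r)$, $h(p_1)$, $h(p_2)$ and $h(v_i)-h(v_j)$. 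Adding two of them and subtracting the third cancels $h(p_1)$ and $h(p_2)$ and leaves $2h(s_r)=h(v_i)-h(v_j)$, as claimed. This is exactly the three-cycle elimination already used for Lemma~\ref{if.balanceable}; in particular it re-proves, and sharpens, that lemma, since it shows $h(v_i)-h(v_j)=2h(s_r)\in 2A$ with an explicit half displayed.

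The step I expect to be the real obstacle is the edge-disjointness claim, and this is exactly where the hypothesis that $s_r$ be \emph{short} is indispensable: a $3$-weakly closed ttrail that re-enters the $3$-edge-connected component could share edges with two or even all three of the Menger ttrails, and then no identity of this form would survive. That claim is governed by the ``twin'' rigidity of Theorem~\ref{theorem.twin} together with the uniqueness of the short decomposition in Lemma~\ref{weak.space.dec} — precisely as in the proof of Theorem~\ref{four.one} — and once it is in hand the rest is routine bookkeeping.
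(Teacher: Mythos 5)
Your proof is correct and essentially reproduces the paper's own argument: produce two ttrails $p_1,p_2$ from $v_j$ to $v_i$ with $s_r,p_1,p_2$ pairwise edge-disjoint, evaluate the balanced $h$ on the three closed ttrails formed by the pairs, and take the two-minus-one combination to isolate $2h(s_r)$; the only (cosmetic) difference is that you extract $p_1,p_2$ via the Theorem~\ref{four.one}-style ``$s_r$ meets at most one of three edge-disjoint ttrails'' argument, whereas the paper cites the construction in Lemma~\ref{six.one} (delete the first edge of $s_r$ and apply Menger), both routes being already present in the paper. One caveat you share with the paper rather than a gap of your own: applied literally, the identity $h(q)+h(q')=h(u)-h(u')$ for edge-disjoint ttrails from $u$ to $u'$ yields $2h(s_r)=h(v_j)-h(v_i)$ for $s_r$ going from $v_j$ to $v_i$, so the sign in the lemma's statement (and in the paper's displayed balance equations, which contradict its own remark before Lemma~\ref{six.one}) has the endpoints swapped $-$ and the orientation your identity gives is the one actually used later in the proof of Theorem~\ref{six.three}.
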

\begin{proof} As in Lemma~\ref{six.one} we find ttrails $p_1$ and $p_2$ from $v_j$ to $v_i$ such that $s_r, p_1$ and $p_2$ are pairwise edge-disjoint. The inverse ttrails $p_1^{-1},p_2^{-1}$ and $s_r^{-1}$ will contain the same edges as the original ones and the same vertices, except containing the initial $v_i$ and not containing the final $v_j$. Hence we can compute the values of the balanced function $h$ along the following the closed ttrails: $h(p_1)+h(v_j)+h(p_2)-h(v_i)=0$, $h(p_1)+h(v_j)+h(s_r)-h(v_i)=0$, and $h(s_r)+h(v_j)+h(p_2)-h(v_i)=0$. Adding the last two equations and subtracting the first equation gives us $2h(s_r)+h(v_j)-h(v_i)=0$.
\end{proof}
Let $s_{1,1},...,s_{1,r_1},...,s_{con_3(G),1},...,s_{con_3(G),r_{con_3(G)}}$, be the selected short $3$-weakly closed ttrails (as in the Lemma \ref{six.one}) from all $3$-edge connected components of $G$. Numbers $r_i$ can be equal to zero (if the corresponding $3$-edge connected component consists of only one vertex) and $r_1+...+r_{con_3(G)}=|V|-con_3(G)$. We label $s'_1=s_{1,1},...,s'_{r_1}=s_{1,r_1},...,s'_{|V|-con_3(G)}=s_{con_3(G),r_{con_3(G)}}$. Clearly, all homological paths $\epsilon(s'_1),...,\epsilon(s'_{|V|-con_3(G)})$ are $3$-weak homological cycles and, thus, belong to the $3$-Weak Cycle Space of $G$. Notice, that no nontrivial $\mathbb{F}_2$-linear combination of these $3$-weak homological cycles belongs to the Cycle Space of $G$. Indeed, since for each equivalence component $W_i$ containing $m$ vertices we selected $m-1$ short $3$-weakly closed ttrails $s_{i,1},...,s_{i,r_i}$ and since for any $w_i\in W_i$, all other vertices of $W_i$ are connected to $w_i$ by these $s_{i,1},...,s_{i,r_i}$ or their sums, the pigeon-hole principle assures that no nontrivial homological cycles can be obtained by taking $\mathbb{F}_2$-linear combinations of $\epsilon(s_{i,1}),...,\epsilon(s_{i,r_i})$. And $\delta$ takes a nontrivial $\mathbb{F}_2$-linear combination of $\epsilon(s_{i,1}),...,\epsilon(s_{i,r_i})$ to a nonempty set of vertices from $W_i$. Hence, $\delta$ of any nontrivial $\mathbb{F}_2$-linear combination of $\epsilon(s'_1),...,\epsilon(s'_{|V|-con_3(G)})$ can not be a homological cycle.
\\ \\
Thus, the subspace $\Lambda'$ of the $3$-Weak Cycle Space of $G$ spanned by $\epsilon(s'_1),...,\epsilon(s'_{|V|-con_3(G)})$ is also a lift to $3$-Weak Cycle Space of $G$ of its quotient by the Cycle Space of $G$.

\begin{thm} \label{six.three} The Abelian group $W(V\bigcup E,A)$ of all balanced functions $h:V\bigcup E\rightarrow A$ is isomorphic to $A^{|V|+con_3(G)-con(G)}$
\end{thm}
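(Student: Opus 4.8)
The plan is to imitate the proofs of Theorems~\ref{balanced.edge} and~\ref{balanceable} and exhibit an explicit isomorphism. Keep the vertex--representatives $w_1,\dots,w_{con_3(G)}$ of the $3$-edge-connected components (as in Theorem~\ref{balanceable}), keep the short $3$-weakly closed ttrails $s'_1,\dots,s'_{|V|-con_3(G)}$ chosen via Lemma~\ref{six.one}, and — exactly as in the proof of Theorem~\ref{balanced.edge} but using the lift $\Lambda'$ in place of $\Lambda$ — fix a basis $c_1,\dots,c_{|E|-|V|+con(G)}$ of the Cycle Space, then the $3$-weak homological cycles $\epsilon(s'_1),\dots,\epsilon(s'_{|V|-con_3(G)})$, then some edges $e_1,\dots,e_{con_3(G)-con(G)}$, together forming a basis of $\mathbb{F}_2^E$. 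For $h\in W(V\bigcup E,A)$ write $h(s'_j)$ for the value of $h$ along the ttrail $s'_j$ and define
$$\Phi(h)=\bigl(h(w_1),\dots,h(w_{con_3(G)}),\,h(s'_1),\dots,h(s'_{|V|-con_3(G)}),\,h(e_1),\dots,h(e_{con_3(G)-con(G)})\bigr).$$
Since $con_3(G)+(|V|-con_3(G))+(con_3(G)-con(G))=|V|+con_3(G)-con(G)$, this is a homomorphism into the asserted group, and (unlike $\psi$ in Theorem~\ref{balanceable}) there is no side condition to verify for $\Phi$ to be well defined.

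For injectivity, assume $h$ balanced with $\Phi(h)=0$. First I would show $h\equiv 0$ on $V$: it vanishes on each $w_i$; inside every $3$-edge-connected component the chosen ttrails $s'_j$ form a spanning tree on its vertices by Lemma~\ref{six.one}(1), and applying Lemma~\ref{six.two} along each tree edge — where $2h(s'_j)=0$ forces equal $h$-values at the two endpoints — propagates the value $0$ from the representative to the whole component. Once $h|_V\equiv 0$, the restriction $h|_E$ is a balanced function on $E$ with $h|_E(\epsilon(s'_j))=h(s'_j)=0$ and $h|_E(e_t)=0$, so the injectivity argument already carried out for $\phi$ in the proof of Theorem~\ref{balanced.edge} applies verbatim (with $\epsilon(s'_j)$ in place of $c'_j$) and gives $h|_E\equiv 0$; hence $h\equiv 0$.

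For surjectivity, given $\mathbf a=(a_1,\dots,a_{con_3(G)},a'_1,\dots,a'_{|V|-con_3(G)},a''_1,\dots,a''_{con_3(G)-con(G)})$ I would build $h$ in two stages. Define $g:V\to A$ by $g(w_i)=a_i$ and, for a non-representative vertex $v$, by accumulating along the spanning-tree path from its representative to $v$ the increments prescribed by Lemma~\ref{six.two}, namely $+2a'_j$ when the path traverses $s'_j$ from its initial to its final vertex (well defined because the tree has no cycles). Then $g(v)-g(w(v))$ is a sum of terms $\pm 2a'_j\in 2A$, so by Lemmas~\ref{five.two} and~\ref{five.three} (the surjectivity mechanism of Theorem~\ref{balanceable}) $g$ is balanceable; pick $f_0:E\to A$ balancing $g$ and set $h_0=g\cup f_0\in W(V\bigcup E,A)$. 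By construction $h_0(w_i)=a_i$, and by Lemma~\ref{six.two} $2h_0(s'_j)=2a'_j$, so $b_j:=a'_j-h_0(s'_j)\in A_2$, while $b''_t:=a''_t-h_0(e_t)\in A$. By the surjectivity part of Theorem~\ref{balanced.edge} for the above basis there is $f_1\in H(E,A)$ with $f_1(\epsilon(s'_j))=b_j$ and $f_1(e_t)=b''_t$; then $h:=h_0+f_1$ is balanced and, since $f_1$ vanishes on every vertex, $\Phi(h)=\mathbf a$.

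The main obstacle is precisely the point that the ``half-type'' discrepancies $a'_j-h_0(s'_j)$ are forced into $A_2$ by Lemma~\ref{six.two}, so that they can be absorbed by the $A_2$-summands of $H(E,A)$ while the $2A$-part of the information is already carried by $g$. This is the mechanism by which the $A_2^{|V|-con_3(G)}$ in $H(E,A)$ (Theorem~\ref{balanced.edge}) and the $(2A)^{|V|-con_3(G)}$ in $B(V,A)$ (Theorem~\ref{balanceable}) recombine into $A^{|V|-con_3(G)}$, reflecting $|A|=|A_2|\cdot|2A|$ at the level of the extension $0\to H(E,A)\to W(V\bigcup E,A)\to B(V,A)\to 0$ of Lemma~\ref{quotient.group}, which in general does not split.
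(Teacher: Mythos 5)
Your proposal is correct, and its skeleton — the map $\Phi$ (a coordinate permutation of the paper's $\xi$), the basis $c_1,\dots,c_{|E|-|V|+con(G)},\epsilon(s'_1),\dots,\epsilon(s'_{|V|-con_3(G)}),e_1,\dots,e_{con_3(G)-con(G)}$, and the injectivity argument (kill $h$ on vertices via Lemma~\ref{six.two} propagated along the tree of ttrails from Lemma~\ref{six.one}, then treat $h|_E$ as an element of $H(E,A)$ vanishing on a basis of $\mathbb{F}_2^E$) — is exactly the paper's. Where you genuinely diverge is surjectivity. The paper builds $h$ directly: it prescribes $h$ on the representatives, the $e_t$, the values along the $s'_j$ and on the cycle basis, propagates to the remaining vertices by $h(v)=h(u)+2h(s'_i)$, recovers $h(\epsilon(s'_j))$ by subtracting vertex values, and extends $\mathbb{F}_2$-linearly to all edges, leaving the verification that the result is balanced and that $\xi(h)=\mathbf a$ to a terse ``direct plugging-in.'' You instead factor the construction through the two theorems already proved: first realize the vertex data by a balanceable $g$ (its differences from the representatives lie in $2A$ by construction, so Theorem~\ref{balanceable}'s surjectivity applies), pick a balancing $f_0$ to get $h_0\in W(V\bigcup E,A)$, and then observe — this is the one new computation, and it is forced by Lemma~\ref{six.two} — that the discrepancies $a'_j-h_0(s'_j)$ lie in $A_2$, so they, together with the arbitrary discrepancies on the $e_t$, can be corrected by some $f_1\in H(E,A)$ supplied by Theorem~\ref{balanced.edge} for the lift $\Lambda'$. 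This buys you two things the paper leaves implicit: balancedness of the final $h=h_0+f_1$ is automatic (a sum of balanced functions), and the mechanism by which $A_2^{|V|-con_3(G)}$ from $H(E,A)$ and $(2A)^{|V|-con_3(G)}$ from $B(V,A)$ recombine into full $A$-factors is made explicit; the paper's direct construction, in exchange, is self-contained and does not lean on the earlier surjectivity statements. Both arguments rest on the same preparatory facts (Lemma~\ref{six.one}, Lemma~\ref{six.two}, and the observation that $\Lambda'$ is a valid complement to the Cycle Space inside the $3$-Weak Cycle Space), so your route is a legitimate, slightly more modular alternative rather than a different theorem.
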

\begin{proof} Similarly to what we did in the proof of Theorem~\ref{balanced.edge}, we choose any basis $c_1,...,c_{|E|-|V|+con(G)}$ of the Cycle Space of $G$. Next, we select some edges $e_1,...,e_{con_3(G)-con(G)}$ of $G$ so that they, together with $c_1,...,c_{|E|-|V|+con(G)}$ and with the above-mentioned $\epsilon(s'_1),...,\epsilon(s'_{|V|-con_3(G)})$, constitute a basis of $\mathbb{F}_2^E$. Finally, similarly to what we did in the proof of Theorem~\ref{balanceable}, we select some representative vertices $w_1,...,w_{con_3(G)}$ for the $3$-edge-connected components of $G$.
\\ \\
An isomorphism $\xi:W(V\bigcup E,A)\rightarrow A^{|V|+con_3(G)-con(G)}$ is obtained by defining $\xi(h)$ as:
$$\!\!\!\!\!\!\!\! \xi(h)=(h(w_1),...,h(w_{con_3(G)}),h(e_1),...,h(e_{con_3(G)-con(G)}),h(s'_1),...,h(s'_{|V|-con_3(G)}))$$
To see that $\xi$ is one-to-one observe that if $\xi(h)=0$ then, due to Lemma \ref{six.two}, $h$ must be $0$ on every vertex of $G$. Hence, we can regard $h$ as an element of $H(E,A)$. But $h$ is $0$ on every homological cycle of $G$ and on $\epsilon(s'_1),...,\epsilon(s'_{|V|-con_3(G)})$ and $e_1,...,e_{con_3(G)-con(G)}$. Hence $h$ is $0$ on every element of $\mathbb{F}_2^E$. Thus, $h$ is the zero function on $V\bigcup E$.
\\ \\
To show that $\xi$ is onto we, for every element $\textbf{a}= (a_1,...,a_{|V|+con_3(G)-con(G)})$ of $A^{|V|+con_3(G)-con(G)}$, construct $h\in W(V\bigcup E,A)$ such that $\xi(h)=\textbf{a}$.
\begin{enumerate}
\item For all the vertices $w_1,...,w_{con_3(G)}$ we set $h(w_i)=a_i$;
\item For all the edges $e_1,...,e_{con_3(G)-con(G)}$ we set $h(e_t)=a_{t+con_3(G)}$;
\item For all the ttrails $s'_1,...,s'_{|V|-con_3(G)}$ we require $h(s'_j)=a_{j+2con_3(G)-con(G)}$;
\item We set $h(c_i)=0$ for all $c_1,...,c_{|E|-|V|+con(G)}$.
\end{enumerate}
Next, we, starting from $w_1,...,w_{con_3(G)}$, inductively extend $h$ to all remaining vertices $v_{1+con_3(G)},...,v_{|V|}$ of $G$ as follows: whenever $h$ is defined on some vertex $u$ and not yet defined on a vertex $v$ such that there is a short $3$-weakly closed ttrail $s'_i$ from $v$ to $u$, we set $h(v)=h(u)+2h(s_i)$. After obtaining values of $h$ on all the vertices, we compute values of $h$ on $\epsilon(s'_1),...,\epsilon(s'_{|V|-con_3(G)})$ by subtracting values on appropriate vertices from values on $s'_1,...,s'_{|V|-con_3(G)}$.
\\ \\
Finally, we extend $h$ by $\mathbb{F}_2$-linearity from the basis $$c_1,...,c_{|E|-|V|+con(G)},e_1,...,e_{con_3(G)-con(G)},\epsilon(s'_1),...,\epsilon(s'_{|V|-con_3(G)})$$ of $\mathbb{F}_2^E$ to the enire $\mathbb{F}_2^E$. Thus, we obtain values of $h$ on all the edges of $G$.  By a direct plugging-in we can see that indeed $\xi(h)=\textbf{a}$
\end{proof}

\section{Conclusion} When every element of $A$ is of the rank $2$, for example if $A=\mathbb{Z}_2$, then $2A=0$ and $A_2=A$. In that case there is no difference between the weights on edges and the gains on edges. Thus our results for $H(E,A)$ in that case coincide for that case with the classical results for the balanced voltage and gain graphs. Graphs, with the elements of $\mathbb{Z}_2$ assigned to their edges, are called signed graphs. Balanced signed graphs were first characterized by Harary. When $A$ is the additive group of the real numbers then $2A=A$ and $A_2=0$ and our results for $H(E,A)$ coincide with the results of~\cite{OI}. In the case when $A$ is a finite group, which is the case studied in~\cite{NI}, Theorem 4 of~\cite{NI} follows from our Theorem~\ref{six.three}.
\\ \\
In this work we studied the structure of the group of balanced functions $W(V\bigcup E,A)$, its subgroup $H(E,A)$, and its factor-group $B(V,A)$. The dual problem is to understand the group structure of the subgroup $B'(V,A)$ of $W(V\bigcup E,A)$ of all balanced functions on vertices, and the group structure of the factor-group $H'(E,A)$ of all balanceable functions on edges. Thus the elements of $B'(V,A)$ are, by abuse of notation, such elements of $W(V\bigcup E,A)$, which take $0$ value of every edge of $G$. For the case of $A=Z_2$ this would be identical to describing the consistent marked (vertex-signed) graphs, which were studied and characterized in~\cite{Acharya1} , \cite{Acharya2}, \cite{Rao}, \cite{Hoede}, \cite{RX} and in~\cite{ZR}. In~\cite{Roberts} some relations between consistent marked graphs and balanced signed graphs are studied.
\\ \\
The structure of the group of balanced Abelian group labeling on other discrete structures, such as simplicial complexes and hypergraphs, is also of interest.

\end{document}